\theoremstyle{plain}
\newtheorem{theorem}{Theorem}[section]
\newtheorem{lemma}{Lemma}[section]
\theoremstyle{definition}
\title{All Graphs with a Failed Zero-Forcing Number of Two}
\author{Luis Gomez, Karla Rubi, Jorden Terrazas}
\date{July 2021}
\author[1]{Luis Gomez}
\author[2]{Karla Rubi}
\author[3]{Jorden Terrazas}
\author[4]{Darren Narayan}
\affil[1]{University of Arkansas}
\affil[2]{California State University - Dominguez Hills}
\affil[3]{Southern Methodist University}
\affil[4]{Rochester Institute of Technology}
\begin{document}

\maketitle
\begin{abstract}
 Given a graph $G$, the zero-forcing number of $G$, $Z(G)$, is the smallest cardinality of any set $S$ of vertices on which repeated applications of the forcing rule results in all vertices being in $S$. The forcing rule is: if a vertex $v$ is in $S$, and exactly one neighbor $u$ of $v$ is not in $S$, then $u$ is added to $S$ in the next iteration. Zero-forcing numbers have attracted great interest over the past 15 years and have been well studied. In this paper we investigate the largest size of a set $S$ that does not force all of the vertices in a graph to be in $S$. This quantity is known as the failed zero-forcing number of a graphs and will be denoted by $F(G)$, and has received attention in recent years. We present new results involving this parameter. In particular, we completely characterize all graphs $G$ where $F(G)=2$, solving a problem posed in 2015 by Fetcie, Jacob, and Saavedra.   
\end{abstract}

\section{Introduction}
Given a graph $G$, the zero-forcing number of $G$, $Z(G)$, is the smallest cardinality of any set $S$ of vertices on which repeated applications of the forcing rule results in all vertices being in $S$. The forcing rule is: if a vertex $v$ is in $S$, and exactly one neighbor $u$ of $v$ is not in $S$, then $u$ is added to $S$ in the next iteration. Zero forcing numbers have attracted great interest over the past 15 years and have been well studied \cite{Fallat}. In this paper we investigate the largest size of a set $S$ that does not force all of the vertices in a graph to be in $S$. This quantity is known as the failed zero forcing number of a graphs and will be denoted by $F(G)$, and has received attention in recent years \cite{Fetcie}. Independently, a closely related property called the zero blocking number of a graph was introduced by Beaudouin-Lafona, Crawford, Chen, Karst, Nielsen, and Sakai Troxell \cite{Karst} and Karst, Shen, and Vu \cite{KarstB}. The zero blocking number of a graph $G$ equals $|V(G)|-F(G)$.

We will use $K_n$, $P_n$, $C_n$, to denote the complete graph, path, and cycle on $n$ vertices, respectively. The complete bipartite graph with $r$ vertices in one part and $t$ in the other part will be denoted $K_{r,t}$. The disjoint union of graphs $G$ and $H$ will be denoted by $G+H$. Following the terminology of \cite{Erdos}, we will refer an induced $K_{1,2}$ as a \textquotedblleft cherry\textquotedblright. A vertex $v$ is a cut-vertex of a graph $G$ if $G-v$ has more components than $G$. We will refer to a subgraph that contains a cut-vertex $v$ of degree 3 that is adjacent to vertices $u$ and $w$, which are adjacent to each other and have degree 2 as a pendant triangle.

Throughout the paper we will use colorings to describe our failed zero-forcing sets where the vertex in $S$ will be colored blue (or referred to as filled) and the vertices not in $S$ will be colored white (or referred to as uncolored). A vertex will be called \textit{true blue} if it is colored blue and all of its neighbors are colored blue. A vertex that is not a true blue vertex will be referred to as \textit{non true blue}.

Failed zero-forcing numbers were introduced by Fetcie, Jacob, and Saavedra \cite{Fetcie} where they established numerous results. In particular they proved that for any connected graph $0\leq F(G)\leq n-2$ and gave a characterization of all graphs $G$ where $F(G)=0,1,n-1$ and $n-2$. They showed that the only graphs where $F(G)=0$ are either $K_1$ or $K_2$ and the only graphs where $F(G)=1$ are $2K_1$, $P_3$, $K_3$, or $P_4$. It was also shown that $F(G)=n-1$ if and only if $G$ contains an isolated vertex.  In addition, $F(G)=n-2$ if and only if $G$ contains a module of order $2$, where a module is defined as follows. A set $X$ of vertices in $V(G)$ is a module if all vertices in $X$ have
the same set of neighbors among vertices not in $X$. They gave examples of some graphs where $F(G)=2$ and posed the problem of characterizing all graphs with this property. We provide a complete solution to this problem in Section 2.

\section{Graphs with a Failed Zero-forcing number of 2}

In this paper we provide a characterization of all graphs with a failed zero-forcing number of 2. We first examined all graphs with between 3 and 6 vertices and identified all graphs with $F(G)=2$. These are shown in Figure 1. Later we show that no other graphs exist. We will show that if $G$ has at least 7 vertices then $F(G) \geq 3$.

  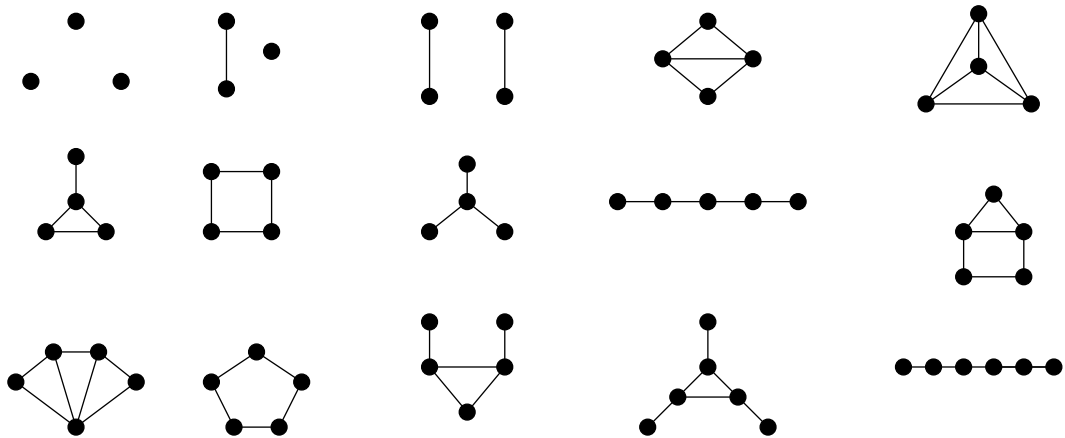
\begin{figure}[h!]
\begin{center}
\begin{tikzpicture}[node distance = 1cm, line width = 0.5pt]

\coordinate (1) at (0,0);
\coordinate (2) at (0,0.5);
\coordinate (3) at (0.5,-0.4);
\coordinate (4) at (-0.5,-0.4);
\coordinate (5) at (-0.5,1.4);
\coordinate (6) at (-0.5,2.4);
\coordinate (7) at (0.5,1.4);
\coordinate (8) at (0.5,2.4);
\coordinate (9) at (0,-2.8);
\coordinate (10) at (0.5,-2.2);
\coordinate (11) at (-0.5,-2.2);
\coordinate (12) at (0.5,-1.6);
\coordinate (13) at (-0.5,-1.6);
\coordinate (14) at (2,0);
\coordinate (15) at (2.6,0);
\coordinate (16) at (3.2,0);
\coordinate (17) at (3.8,0);
\coordinate (18) at (4.4,0);
\coordinate (19) at (3.2,1.4);
\coordinate (20) at (3.2,2.4);
\coordinate (21) at (2.6,1.9);
\coordinate (22) at (3.8,1.9);
\coordinate (23) at (6.8,1.8);
\coordinate (24) at (6.8,2.5);
\coordinate (25) at (6.1,1.3);
\coordinate (26) at (7.5,1.3);
\coordinate (27) at (6.6,-1);
\coordinate (28) at (7.4,-1);
\coordinate (29) at (6.6,-0.4);
\coordinate (30) at (7.4,-0.4);
\coordinate (31) at (7,0.1);
\coordinate (32) at (7,-2.2);
\coordinate (33) at (7.4,-2.2);
\coordinate (34) at (7.8,-2.2);
\coordinate (35) at (6.6,-2.2);
\coordinate (36) at (6.2,-2.2);
\coordinate (37) at (5.8,-2.2);
\coordinate (38) at (3.2,-2.2);
\coordinate (39) at (3.2,-1.6);
\coordinate (40) at (3.6,-2.6); 
\coordinate (41) at (2.8,-2.6); 
\coordinate (42) at (2.4,-3.0); 
\coordinate (43) at (4,-3.0); 
\coordinate (44) at (-3.4,0.4); 
\coordinate (45) at (-2.6,0.4); 
\coordinate (46) at (-3.4,-0.4); 
\coordinate (47) at (-2.6,-0.4); 
\coordinate (48) at (-2.6,2); 
\coordinate (49) at (-3.2,2.4); 
\coordinate (50) at (-3.2,1.6); 
\coordinate (51) at (-5.2,2.4);
\coordinate (51) at (-4.6,1.6);
\coordinate (51) at (-5.8,1.6);
\coordinate (52) at (-5.2,0);
\coordinate (53) at (-5.2,0.6);
\coordinate (54) at (-5.6,-0.4);
\coordinate (55) at (-4.8,-0.4);
\coordinate (56) at (-5.2,-3);
\coordinate (57) at (-6,-2.4);
\coordinate (58) at (-4.4,-2.4);
\coordinate (59) at (-4.9,-2);
\coordinate (60) at (-5.5,-2);
\coordinate (61) at (-2.8,-2);
\coordinate (62) at (-3.4,-2.4);
\coordinate (63) at (-2.2,-2.4);
\coordinate (64) at (-3.1,-3.0);
\coordinate (65) at (-2.5,-3.0);

\draw (1)--(2);
\draw (1)--(3);
\draw (1)--(4);
\draw (5)--(6);
\draw (7)--(8);
\draw (9)--(10);
\draw (9)--(11);
\draw (10)--(11);
\draw (10)--(12);
\draw (11)--(13);
\draw (14)--(15);
\draw (15)--(16);
\draw (16)--(17);
\draw (17)--(18);
\draw (19)--(21);
\draw (21)--(22);
\draw (19)--(22);
\draw (20)--(21);
\draw (20)--(22);
\draw (23)--(24);
\draw (23)--(25);
\draw (23)--(26);
\draw (24)--(25);
\draw (24)--(26);
\draw (25)--(26);
\draw (27)--(28);
\draw (27)--(29);
\draw (28)--(30);
\draw (29)--(30);
\draw (29)--(31);
\draw (30)--(31);
\draw (32)--(33);
\draw (33)--(34);
\draw (34)--(35);
\draw (35)--(36);
\draw (36)--(37);
\draw (38)--(39);
\draw (38)--(40);
\draw (38)--(41);
\draw (40)--(41);
\draw (41)--(42);
\draw (40)--(43);
\draw (44)--(45);
\draw (46)--(47);
\draw (44)--(46);
\draw (45)--(47);
\draw (49)--(50);
\draw (52)--(53);
\draw (52)--(54);
\draw (54)--(55);
\draw (52)--(55);
\draw (56)--(57);
\draw (56)--(58);
\draw (56)--(59);
\draw (56)--(60);
\draw (57)--(60);
\draw (59)--(60);
\draw (58)--(59);
\draw (61)--(62);
\draw (61)--(63);
\draw (62)--(64);
\draw (63)--(65);
\draw (64)--(65);

\filldraw [black] 
(0,0) circle (3pt)
(0,0.5) circle (3pt)
(0.5,-0.4) circle (3pt)
(-0.5,-0.4) circle (3pt)
(-0.5,1.4) circle (3pt)
(-0.5,2.4) circle (3pt)
(0.5,1.4) circle (3pt)
(0.5,2.4) circle (3pt)
(0,-2.8) circle (3pt)
(-0.5,-2.2) circle (3pt)
(0.5,-2.2) circle (3pt)
(-0.5,-1.6) circle (3pt)
(0.5,-1.6) circle (3pt)
(2,0) circle (3pt)
(2.6,0) circle (3pt)
(3.2,0) circle (3pt)
(3.8,0) circle (3pt)
(4.4,0) circle (3pt)
(3.2,1.4) circle (3pt)
(3.2,2.4) circle (3pt)
(3.8,1.9) circle (3pt)
(2.6,1.9) circle (3pt)
(6.8,1.8) circle (3pt)
(6.8,2.5) circle (3pt)
(6.1,1.3) circle (3pt)
(7.5,1.3) circle (3pt)
(6.6,-1) circle (3pt)
(7.4,-1) circle (3pt)
(6.6,-0.4) circle (3pt)
(7.4,-0.4) circle (3pt)
(7,0.1) circle (3pt)
(7,-2.2) circle (3pt)
(7.4,-2.2) circle (3pt)
(7.8,-2.2) circle (3pt)
(6.6,-2.2) circle (3pt)
(6.2,-2.2) circle (3pt)
(5.8,-2.2) circle (3pt)
(3.2,-2.2) circle (3pt)
(3.2,-1.6) circle (3pt)
(3.6,-2.6) circle (3pt)
(2.8,-2.6) circle (3pt)
(2.4,-3.0) circle (3pt)
(4,-3.0) circle (3pt)
(-3.4,0.4) circle (3pt)
(-3.4,-0.4) circle (3pt)
(-2.6,0.4) circle (3pt)
(-2.6,-0.4) circle (3pt)
(-2.6,2) circle (3pt)
(-3.2,2.4) circle (3pt)
(-3.2,1.5) circle (3pt)
(-5.2,2.4) circle (3pt)
(-4.6,1.6) circle (3pt)
(-5.8,1.6) circle (3pt)
(-5.2,0) circle (3pt)
(-5.2,0.6) circle (3pt)
(-4.8,-0.4) circle (3pt)
(-5.6,-0.4) circle (3pt)
(-5.2,-3) circle (3pt)
(-6,-2.4) circle (3pt)
(-4.4,-2.4) circle (3pt)
(-5.5,-2) circle (3pt)
(-4.9,-2) circle (3pt)
(-2.8,-2) circle (3pt)
(-2.2,-2.4) circle (3pt)
(-3.4,-2.4) circle (3pt)
(-2.5,-3.0) circle (3pt)
(-3.1,-3.0) circle (3pt);
\end{tikzpicture}
\end{center}
\caption{The 15 graphs with $F(G)=2$}
\end{figure}

It is not difficult to verify that each of these graphs have a failed zero-forcing number of $2$. For the sake of completeness, we include the details. The coloring of the vertices gives a lower bound for the failed zero-forcing number of $2$. For the disconnected three graphs the upper bound for $F(G)=2$ follows since coloring any three vertices blue forces all of the vertices to be colored blue.  For the four connected graphs with four vertices the upper bound follows since $F(G)\leq n-2$. For the two paths, the coloring of any three vertices blue will result in either a vertex of degree 1 being colored blue or two adjacent vertices colored blue, both of which will color all of the vertices in the graph. For the graph of $C_{5}$ coloring any three vertices blue will mean that two adjacent vertices were colored blue, which will force the remaining vertices in the graph. For the graph of $C_{5}$ and a chord note that coloring any three of the vertices in the graph blue will either include two vertices of the triangle or three vertices in the $C_{4}$ both of which will force all vertices in the graph to be blue. For the wheel graph with five vertices, coloring three vertices blue will either include the vertex of degree four and two other vertices, or three vertices of degree two or three, both of which will force all vertices in the graph to be blue. For the graph consisting of a triangle and two pendant edges note that coloring any three vertices blue will either include the three vertices of degree greater than 1, one vertex of degree 1 and two vertices of degree greater than 1, or two vertices of degree 1, all of which will force all vertices to be colored blue. Finally for the triangle with three pendant edges, coloring any three vertices blue will either include all three vertices of degree greater than 1, or at least one vertex of degree 1 and two other vertices that are either degree 1 or greater than 1, all of which will force all vertices to be blue.

We examined all connected graphs on six vertices \cite{Cvetković} and found that all but two graphs have a failed zero forcing number of at least 3. The two graphs $P_6$ and the corona of $K_3$, (the last two graphs in Figure 1) have a failed zero forcing number of $2$. Adding an additional vertex $v$ and any number of edges between $v$ and other vertices of the graph results in a graph with a failed zero forcing number of 3 (This is verified later in Section 2.1). The above suggests that any graph with 7 or more vertices has a failed zero forcing number of at least 3, which we will prove.

The following theorem guarantees that our list of 15 graphs with $F(G)=2$ is complete.

\begin{theorem}
Let $G$ be a graph with at least $7$ vertices. Then $F(G^{\prime })\geq 3$.
\end{theorem}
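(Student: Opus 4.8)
The plan is to reformulate the statement combinatorially and then induct on $n=|V(G)|$, deleting a non-cut-vertex at each step. A blue set $S$ is a failed zero-forcing set exactly when it is \emph{stalled} — no blue vertex has exactly one white neighbor — and $S\neq V(G)$; so it suffices to exhibit, in every $G$ with $n\geq 7$, a stalled proper set of size at least $3$. The disconnected case is immediate: if $C$ is a smallest component, color $V(G)\setminus C$ blue; every blue vertex is true blue, the set is proper, and its size is $n-|C|\geq\lceil n/2\rceil\geq 4$. So assume $G$ is connected, pick a non-cut-vertex $v$ so that $H=G-v$ is connected on $n-1\geq 6$ vertices, and set $A=N_G(v)\subseteq V(H)$ with $A\neq\emptyset$. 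The crux is to show $F(H+v)\geq 3$ for every such $H$ and $A$; the theorem then follows by induction, the base case $n=7$ invoking the classification of connected six-vertex graphs and the step $n\mapsto n+1$ invoking the inductive hypothesis $F(H)\geq 3$.

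Assume first $F(H)\geq 3$ and fix a \emph{maximum} failed zero-forcing set $S$ of $H$; it is stalled, $|S|=F(H)\geq 3$, and since $H$ is connected $F(H)\leq |V(H)|-2$, so $W:=V(H)\setminus S$ has $|W|\geq 2$. If $|A\cap W|\neq 1$, then $S\cup\{v\}$ is stalled in $H+v$ — the new vertex has $0$ or at least $2$ white neighbors, and every $s\in S$ keeps exactly the white neighbors it had in $H$ — a failed zero-forcing set of size at least $4$. If $|A\cap W|=1$ but no true-blue vertex of $S$ lies in $A$, then $S$ itself (with $v$ left white) is still stalled in $H+v$, because a vertex of $S$ adjacent to $v$ was not true blue and hence already had at least $2$ white neighbors; this gives a failed zero-forcing set of size at least $3$.

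What remains is the case where $|A\cap W|=1$ and, for \emph{every} maximum stalled set $S$, the set $A$ contains a true-blue vertex of $S$ — together with the base cases, where $H$ is $P_6$ or the corona of $K_3$ (the only connected six-vertex graphs with $F=2$) and $F(H+v)\geq 3$ must be verified for every nonempty $A$. In the residual case I would try to repair $S$: recolor $v$ and its unique white neighbor blue, then restore the stalled property by flipping the few true-blue vertices of $S$ that are adjacent to $v$ (here $|W|\geq 2$ keeps the coloring proper, and $n\geq 7$ leaves enough blue vertices), or else argue that the configuration forces $H$ into a short list of shapes handled by hand. I expect this residual case, and the $P_6$ and corona base cases, to be the main obstacle: the graphs $P_6$, $P_7$ and the corona of $K_3$ show that the bound $F(G)\geq 3$ barely holds for $n\leq 7$, so a proof has to thread exactly those examples.

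Finally, a direct argument disposes of a large chunk and serves as a useful check. If $\delta(G)\geq 3$, then either $G$ has three vertices spanning at most one edge — coloring them blue leaves each of them at least $\delta-1\geq 2$ white neighbors, so the triple is stalled — or $\overline{G}$ has no path on three vertices, i.e.\ $G$ is $K_n$ minus a matching, which contains a module of order $2$ and hence satisfies $F(G)=n-2\geq 5$. So all the genuine difficulty lies in the graphs of minimum degree at most $2$, where leaves and pendant paths (as in $P_n$ and the corona of $K_3$) make stalled sets of size $3$ scarce; these are the graphs carried through the induction above.
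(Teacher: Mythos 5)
Your skeleton is sound where it is filled in: the disconnected case, the two easy subcases of the inductive step (when $|A\cap W|\neq 1$, and when $|A\cap W|=1$ but no true-blue vertex of $S$ meets $A$), and the $\delta(G)\geq 3$ argument (three vertices spanning at most one edge give a stalled triple, and otherwise $\overline{G}$ is a matching so $G$ has a module of order $2$) are all correct. But the residual case is not a loose end to be tidied -- it is the entire content of the theorem, and your proposal does not close it. The proposed repair (color $v$ and its unique white neighbor blue, then ``flip'' the true-blue vertices of $S$ adjacent to $v$) is not an argument: recoloring a white vertex blue can create new blue vertices with exactly one white neighbor elsewhere in the graph, and recoloring a true-blue vertex white can leave one of \emph{its} blue neighbors with exactly one white neighbor, so there is no reason the process terminates in a stalled proper set of size at least $3$. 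The graphs $P_7$ and one-vertex extensions of the corona of $K_3$ show how tight this is.

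The structural reason your induction stalls is that the hypothesis $F(H)\geq 3$ is too weak to carry through the step. What actually propagates under adding vertices and edges is the stronger property that $H$ admits a stalled set of size $3$ in which \emph{every} blue vertex has at least two white neighbors (no true-blue vertices at all); such a set survives any extension because the new vertices can be colored white. The paper's proof is organized entirely around this invariant: it enumerates the $112$ connected graphs on six vertices, finds the $16$ for which no such set exists, and then checks by hand that every one- or two-vertex extension of those $16$ does admit such a set, after which arbitrary graphs on $7$ or more vertices are handled by deleting down to a $6$-vertex (or $8$-vertex) core and extending back. Your residual case is precisely the set of configurations arising from those $16$ exceptional graphs, and some finite verification of that kind -- or a genuinely new structural idea replacing it -- is unavoidable; ``a short list of shapes handled by hand'' is a promissory note for the hardest part of the proof.
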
 

 \bigskip 
 It is tempting to think that if you have a graph $G$ where $F(G)=k$, then for any supergraph $H$ of $G$ we have $F(H)\geq k$. However it can be the case that $F(H)<F(G)$ and in fact we can construct graphs where $F(G)-F(H)$ is arbitrarily large. In the figure below we give an example of a graph $G$ where $F(G)=5$ and a supergraph $H$ of $G$ where $F(H)=4$.
 
 \begin{figure}[h!]
\begin{center}
\begin{tikzpicture}[node distance = 1cm, line width = 0.5pt]
\coordinate (1) at (0,0);
\coordinate (2) at (0.75,0);
\coordinate (3) at (1.5,0);
\coordinate (4) at (2.25,0);
\coordinate (5) at (3,0);
\coordinate (6) at (3.75,0);
\coordinate (7) at (3.75,-0.6);

\coordinate (9) at (5.25,0);
\coordinate (10) at (6,0);
\coordinate (11) at (6.75,0);
\coordinate (12) at (7.5,0);
\coordinate (13) at (8.25,0);
\coordinate (14) at (9,0);
\coordinate (15) at (9,-0.6);

\draw \foreach \x [remember=\x as \lastx (initially 1)] in {2,3,4,5,6}{(\lastx) -- (\x)};
\draw (5)--(7);
\draw (9)--(10);
\draw (10)--(11);
\draw (11)--(12);
\draw (12)--(13);
\draw (13)--(14);
\draw (13)--(15);
\draw[solid] (9) to [out=45, in=135] (14);

\foreach \point in {6,7,9,10,12,14,15} \fill (\point) circle (4pt);
\foreach \point in {1,2,3,4,5,9,11,13} \fill (\point) 
circle (4pt);

\filldraw [blue] 
(0,0) circle (3pt)
(0.75,0) circle (3pt)
(1.5,0) circle (3pt)
(2.25,0) circle (3pt)
(3,0) circle (3pt)
(5.25,0) circle (3pt)
(6.75,0) circle (3pt)
(8.25,0) circle (3pt)
(9,-0.6) circle (3pt);

\filldraw [white] 
(3.75,0) circle (3pt)
(3.75,-0.6) circle (3pt)
(6,0) circle (3pt)
(7.5,0) circle (3pt)
(9,0) circle (3pt);


\end{tikzpicture}
\end{center}
\caption{Adding a single edge to $G$ to create a supergraph $H$}
\end{figure}
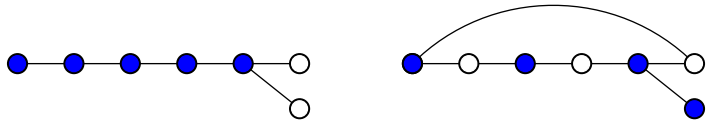

\begin{theorem}
There exist graphs $G$ with a supergraph $H$ where $F(G)-F(H)$ is arbitrarily large.
\end{theorem}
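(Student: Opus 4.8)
The plan is to produce an explicit one‑parameter family $G_j\subseteq H_j$ for which $F(G_j)-F(H_j)$ grows linearly in $j$ (the two graphs in Figure~2 are the case $j=3$). For $j\ge 3$, let $G_j$ be obtained from the path $v_1v_2\cdots v_{2j}$ by adding one new vertex $p$ adjacent to $v_{2j-1}$, so that $v_{2j}$ and $p$ are twin pendant vertices whose only neighbour is $v_{2j-1}$. Let $H_j=G_j+v_1v_{2j}$, which closes $v_1,\dots,v_{2j}$ into a cycle $C_{2j}$ while leaving $p$ pendant at $v_{2j-1}$. Both graphs are connected on $n=2j+1$ vertices, and $H_j$ is a supergraph of $G_j$.

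First I would compute $F(G_j)$. Since $\{v_{2j},p\}$ is a module of order $2$ (both have the single outside‑neighbour $v_{2j-1}$), the result of Fetcie, Jacob, and Saavedra that $F(G)=n-2$ iff $G$ has a module of order $2$ gives $F(G_j)=n-2=2j-1$. (Directly: colouring $v_1,\dots,v_{2j-1}$ blue is stuck, because $v_{2j-1}$ has the two white neighbours $v_{2j}$ and $p$ while $v_1,\dots,v_{2j-2}$ are all true blue, and $F\le n-2$ for connected graphs rules out anything larger.)

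Next I would show $F(H_j)=j+1$. For the lower bound, colour the $j$ odd‑indexed cycle vertices $v_1,v_3,\dots,v_{2j-1}$ together with $p$: this is stuck, since each $v_{2i-1}$ with $i<j$ has two white cycle‑neighbours, $v_{2j-1}$ has the two white cycle‑neighbours $v_{2j-2}$ and $v_{2j}$, and $p$ is true blue. The crux is the matching upper bound: I claim that for any failed zero‑forcing set $S$ of $H_j$, the cycle part $S\cap\{v_1,\dots,v_{2j}\}$ is an independent set of $C_{2j}$, whence $|S|\le j+1$. If it were not independent, there would be a maximal run of two or more consecutive blue cycle‑vertices; since such a run has two distinct endpoints and only one of them can be $v_{2j-1}$, at least one endpoint $v_a$ has degree $2$ in $H_j$ and exactly one white neighbour (the vertex immediately outside the run), so $v_a$ would force — contradicting that $S$ is stuck. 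The degenerate case where all of $C_{2j}$ is blue is handled separately: then $S\ne V(H_j)$ forces $p$ to be white, and $v_{2j-1}$ forces it. Combining the bounds gives $F(H_j)=j+1$, so $F(G_j)-F(H_j)=(2j-1)-(j+1)=j-2\to\infty$, which proves the theorem.

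I expect the \emph{main obstacle} to be exactly the structural claim for $H_j$ — that a stuck set meets the cycle in an independent set — with most of the care going into the "endpoint of a blue run" argument and the separate treatment of the all‑blue‑cycle case; everything else (the module computation for $G_j$ and the explicit stuck sets) is short direct verification.
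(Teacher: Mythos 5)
Your construction is essentially identical to the paper's: the paper also takes a path with a twin pendant pair (so $F(G)=n-2$ by the module characterization) and adds one edge to close the long path into a cycle, yielding a cycle with a pendant vertex whose failed zero-forcing number is only about $n/2$. Your write-up is correct and in fact more complete than the paper's, since you actually prove the upper bound on $F(H_j)$ that the paper merely asserts (formally, your ``blue run must have a forcing endpoint'' argument should be applied to the stalled derived set of $S$ rather than to $S$ itself, but that is an immediate adjustment).
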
 
\begin{proof}

Let $G$ be a graph with vertices $v_{1},v_{2},...,v_{n}$ with edges $\{v_{i}v_{i+1}|$ $i=1,...,n-2\}$ and $\{v_{n-2}v_{n}\}$. Here $F(G)=n-2$.

Now add the edge $v_{1}v_{n}$ to create a supergraph $H$. Since $H$ is a cycle with a pendant vertex we can obtain a maximum-sized failed zero-forcing set by selecting the pendant vertex and $\left\lfloor \frac{n}{2}\right\rfloor $ vertices along the cycle (including the neighbor of the pendant vertex). Hence $F(H)=\left\lfloor \frac{n}{2}\right\rfloor +1$. Since $F(G)-F(H)=n-2-\left( \left\lfloor \frac{n}{2}\right\rfloor +1\right) \geq \frac{n}{2}$ this difference can be made arbitrarily large, but choosing a graph with a sufficiently large number of vertices.
\end{proof}

We next present a series of lemmas that will be used to prove Theorem 2.1.
\medskip
\newline
A characterization of disconnected graphs with a failed zero-forcing number $k$ was given by Fetcie, Jacob, and Saavedra \cite{Fetcie}.

\begin{lemma}
Let $G$ be a disconnected graph with $k\geq 2$ maximal connected components. Let $G_{1},G_{2},...,G_{k}$ be the $k$ maximal connected components of $G$. Then $F(G)=\max \left\{ F(G_{k})+\sum\limits_{l\neq k}\left\vert V(G_{l})\right\vert \right\}$.
\end{lemma}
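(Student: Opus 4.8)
The plan is to exploit the fact that the forcing rule is \emph{local}: whether a blue vertex $v$ forces an uncolored neighbor depends only on $v$ and its neighbors, all of which lie in the same connected component of $G$ as $v$. Consequently, if we start with a blue set $S\subseteq V(G)$ and write $S_i=S\cap V(G_i)$, then the forcing process run on $G$ proceeds inside each component $G_i$ exactly as it would if we ran the process on $G_i$ alone starting from $S_i$. First I would make this precise by induction on the number of rounds of the forcing process, showing that after any number of rounds the blue set of $G$ is the disjoint union over $i$ of the blue sets obtained by running the process on each $G_i$ from $S_i$. In particular, $S$ forces all of $V(G)$ if and only if $S_i$ forces all of $V(G_i)$ for every $i$; equivalently, $S$ is a failed zero-forcing set of $G$ if and only if $S_i$ fails to force $G_i$ for at least one index $i$.

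Given this equivalence, the upper bound is immediate. If $S$ is a failed zero-forcing set of $G$, fix an index $i$ for which $S_i$ is a failed zero-forcing set of $G_i$; then $|S_i|\le F(G_i)$ and $|S_l|\le|V(G_l)|$ for every $l\ne i$, so
\[
|S|\;=\;|S_i|+\sum_{l\ne i}|S_l|\;\le\;F(G_i)+\sum_{l\ne i}|V(G_l)|\;\le\;\max_{1\le j\le k}\Bigl(F(G_j)+\sum_{l\ne j}|V(G_l)|\Bigr).
\]
Taking the maximum over all failed zero-forcing sets $S$ of $G$ gives the "$\le$" direction.

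For the lower bound, let $j$ be an index attaining the maximum on the right-hand side, and let $T$ be a failed zero-forcing set of $G_j$ of maximum size, so $|T|=F(G_j)$. (Such a set exists because $G_j$ is nonempty: the empty set already fails to force it, so $F(G_j)$ is well-defined and is $0$ in the degenerate cases $G_j=K_1,K_2$, which keeps the formula meaningful.) Put $S=T\cup\bigcup_{l\ne j}V(G_l)$. Since $S\cap V(G_j)=T$ does not force $G_j$, the equivalence from the first paragraph shows $S$ is a failed zero-forcing set of $G$, whence $F(G)\ge|S|=F(G_j)+\sum_{l\ne j}|V(G_l)|$. Combining the two inequalities yields the claimed equality.

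I expect the only step requiring genuine care to be the decomposition of the forcing process across components (the induction in the first paragraph); once that is in hand, the remaining argument is pure bookkeeping about sizes of sets, and the extremal set in the lower bound is forced on us — blue everything except one component, and within that component use an optimal failed set.
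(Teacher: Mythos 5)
Your proof is correct. Note that the paper itself offers no proof of this lemma: it is quoted from Fetcie, Jacob, and Saavedra \cite{Fetcie}, so there is no in-paper argument to compare against. Your route is the natural one and is complete: the key observation that forcing never crosses component boundaries (so a set $S$ fails on $G$ if and only if $S\cap V(G_i)$ fails on $G_i$ for at least one $i$) gives the upper bound by splitting any failed set componentwise, and the lower bound by coloring every vertex outside one component blue and using an optimal failed set inside it. You were also right to flag the two points that need care: the componentwise decomposition of the process (your induction on rounds) and the well-definedness of $F(G_j)$ via the empty set being a failed set of any nonempty component. One cosmetic remark: the lemma as printed writes the maximum ambiguously; your reading, $F(G)=\max_{1\le j\le k}\bigl(F(G_j)+\sum_{l\neq j}|V(G_l)|\bigr)$, is the intended one and is what your argument establishes.
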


This theorem can be applied to determine all disconnected graphs with a particular failed zero-forcing number. We show the result for the case where $F(G)=2$.

\begin{lemma}
For a disconnected graph $G$, $F(G)=2$ if and only if $G$ equals $3K_{1}$, $K_{2}+K_{1}$, or $K_{2}+K_{2}$.
\end{lemma}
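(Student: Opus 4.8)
The plan is to lean entirely on Lemma 2.1 together with the known classifications of graphs with small failed zero-forcing number recalled in the introduction, so that the proof reduces to elementary bookkeeping on the sizes of the connected components.

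First, for the easy (``if'') direction I would simply evaluate the formula of Lemma 2.1 on each of the three claimed graphs. For $3K_1$ every component is $K_1$ with $F(K_1)=0$, so the maximum is $0+1+1=2$. For $K_2+K_1$ the maximum is $\max\{F(K_2)+1,\ F(K_1)+2\}=\max\{1,2\}=2$. For $K_2+K_2$ it is $F(K_2)+2=0+2=2$. Hence all three have $F(G)=2$.

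For the forward direction, let $G$ be disconnected with maximal connected components $G_1,\dots,G_k$, $k\ge 2$, and set $n=|V(G)|$. Lemma 2.1 gives $F(G_j)+\bigl(n-|V(G_j)|\bigr)\le 2$ for every $j$, with equality for at least one $j$. Since $F(G_j)\ge 0$ always, this forces $n-|V(G_j)|\le 2$ for every component $G_j$; that is, the number of vertices lying outside any one component is at most $2$. If $k\ge 4$ this is impossible (each component then has at least three vertices outside it), so $k\in\{2,3\}$. If $k=3$, then for each index $j$ the other two components together have at most two vertices, which forces each of them to be $K_1$; letting $j$ range over all three indices shows every component is $K_1$, so $G=3K_1$. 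If $k=2$, then $|V(G_1)|\le 2$ and $|V(G_2)|\le 2$, so each component is $K_1$ or $K_2$ and in particular has $F=0$; the equality clause of Lemma 2.1 then reads $\max\{|V(G_1)|,|V(G_2)|\}=2$, so at least one component is $K_2$, giving $G\in\{K_2+K_1,\ K_2+K_2\}$ — the remaining possibility $K_1+K_1=2K_1$ is excluded since $F(2K_1)=1$. Combining the cases gives exactly the three graphs in the statement.

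I expect essentially no obstacle here once Lemma 2.1 is available: the key observation is just that $F\ge 0$ collapses the maximum to a constraint on component sizes. The only point requiring a little care is the equality clause in the case $k=2$, which is what distinguishes $K_2+K_1$ and $K_2+K_2$ from $2K_1$ (whose failed zero-forcing number is $1$, not $2$).
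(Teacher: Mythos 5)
Your proof is correct and takes essentially the same route as the paper: apply the disconnected-graph formula (Lemma 2.1) and reduce everything to bookkeeping on component sizes via $F\ge 0$ and $F(K_1)=F(K_2)=0$. If anything, your write-up is more complete than the paper's terse proof — you treat the case $k\ge 4$ and the ``if'' direction explicitly, and you correctly demand failed zero-forcing number $0$ for the companion component (the paper's proof says $1$ there, a slip), so nothing needs fixing.
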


\begin{proof}
If $G$ consists of three components the only possibility is for $G$ to equal $3K_{1}$, since if any component contained $k>1$ vertices we would have $F(G)\geq k+1=3$. If $G$ has two components one component must contain exactly two vertices, and the other component must have a failed zero-forcing number of 1, which are either $K_1$ or $K_2$.  
\end{proof}

Now that we have identified all disconnected graphs with $F(G)=2$, we next consider the case of connected graphs. We note that if a graph with 8 or more vertices contains a cherry or a pendant triangle then $F(G)\geq 3$. 

The approach will be as follows. 
\bigskip
\bigskip
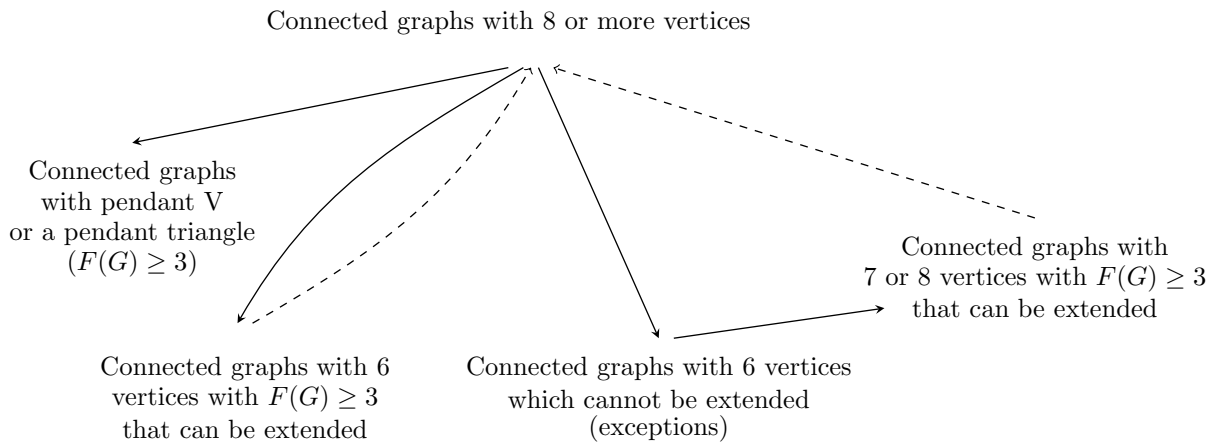
\begin{figure}
\begin{center}
\begin{tikzpicture}[node distance = 1cm, line width = 0.5pt]


\draw [-stealth](0,2) -- (-5,1);
\draw[-stealth] (0.2,2) to [out=210, in=60] (-3.6,-1.4);
\draw[dashed,->] (-3.4,-1.4) to [out=30, in=240] (0.3,2);
\draw [-stealth](0.4,2) -- (2,-1.6);
\draw [-stealth](2.2,-1.6) -- (5,-1.2);
\draw[dashed,->](7,0) -- (0.6,2);



\node (A) at (-5,0.6) {Connected graphs};
\node (B) at (-5,0.2) {with pendant V};
\node (C) at (-5,-0.2) {or a pendant triangle};
\node (C2) at (-5,-0.6) {($F(G)\geq 3$)};

\node (D) at (0,2.6) {Connected graphs with 8 or more vertices};

\node (E) at (-3.5,-2) {Connected graphs with 6};
\node (F) at (-3.5,-2.4) {vertices with $F(G)\geq 3$};
\node (G) at (-3.5,-2.8) {that can be extended};

\node (H) at (2,-2) {Connected graphs with 6 vertices};
\node (I) at (2,-2.4) {which cannot be extended};
\node (J) at (2,-2.8) {(exceptions)};

\node (H) at (7,-0.4) {Connected graphs with};
\node (I) at (7,-0.8) { 7 or 8 vertices with $F(G)\geq 3$};
\node (J) at (7,-1.2) {that can be extended};

\end{tikzpicture}
\caption{Dotted arrows indicate extensions by Lemma 2.4}

\end{center}
\end{figure}

A pendant V is a subgraph containing a cut-vertex $v$ of degree at least $3$ whose removal results in at least two paths in different components. 

We start with a connected graph with 8 or more vertices that does not contain a pendant V or a pendant triangle and reduce it to a graph with 6 vertices by removing vertices and incident edges. We note we can maintain a connected graph by successively removing a leaf from a spanning tree of the graph. We will show in Lemmas 2.6 and 2.7 that the reduced graphs will not contain a pendant V or a pendant triangle. If the resulting graph on six vertices does not have any true blue vertices, then we apply Lemma 2.3 add back vertices and edges to obtain the original graph which will have a failed zero-forcing set of size at least 3. If the resulting graph on six vertices has at least one true blue vertex then we consider all possible extensions to supergraphs with 7 vertices. We show that all of these graphs have $F(G)\geq 3$. If these graphs on 7 vertices do not have any true blue vertices then we can apply Lemma 2.3 and extend the graph to the original graph which will a failed zero-forcing set of size at least 3. If the extensions to 7 vertices have at least one true blue vertex then we consider all possible extensions to 8 vertices. We were able to show that all of these graphs do not have any true blue vertices so they can be extended back to the original graph showing that $F(G)\geq 3$.

\begin{lemma}
    Let $G$ be a graph with a failed zero-forcing set $S$. If every vertex in $S$ is adjacent to two vertices outside of $S$, then $S$ is a failed-zero forcing set for any connected supergraph $H$ of $G$, and thus $F(H)\geq F(G)$.
\end{lemma}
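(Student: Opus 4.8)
The plan is to extract the real content of the hypothesis: if every vertex $v\in S$ has at least two neighbors in $V(G)\setminus S$, then no vertex of $S$ has exactly one neighbor outside $S$, so the forcing rule cannot be applied even once — $S$ is \emph{immediately stalled}. Thus the closure of $S$ under forcing in $G$ is $S$ itself, and since $S\neq V(G)$ (every vertex of $S$ has a neighbor outside $S$), this already re-derives that $S$ is a failed zero-forcing set of $G$. The point of the lemma is that the \emph{reason} $S$ stalls — "every blue vertex has $\geq 2$ white neighbors" — is monotone under adding edges and vertices.

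The heart of the argument is this monotonicity observation. Let $H$ be a connected supergraph of $G$, so $V(G)\subseteq V(H)$ and $E(G)\subseteq E(H)$. First I would note $V(H)\setminus S\supseteq V(G)\setminus S\neq\emptyset$, so $S$ is still a proper subset of $V(H)$. Next, fix any $v\in S$: it has neighbors $x,y\in V(G)\setminus S$ in $G$, and since no edges are removed in passing to $H$, both $x$ and $y$ remain neighbors of $v$ in $H$ and remain outside $S$. Hence in $H$ every vertex of $S$ again has at least two white neighbors, so no forcing step can be performed and the closure of $S$ in $H$ is again $S\subsetneq V(H)$. Therefore $S$ is a failed zero-forcing set of $H$, which gives $F(H)\geq|S|$; applying this with $S$ taken to be a maximum failed zero-forcing set of $G$ satisfying the hypothesis (so $|S|=F(G)$) yields $F(H)\geq F(G)$.

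I do not expect a genuine technical obstacle here — the lemma is essentially an observation. The only points that need a moment's care are: confirming $S$ stays a proper subset of $V(H)$ (immediate, since white vertices of $G$ stay white in $H$); noting that "connected" plays no role in the inequality and is only there to keep $H$ within the class of graphs under discussion; and, if "supergraph" is read to allow new vertices as well as new edges, observing that new vertices only enlarge the white set and the neighbor sets, so the same reasoning applies verbatim.
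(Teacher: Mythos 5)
Your proof is correct and follows essentially the same route as the paper's: color all vertices of $V(H)\setminus V(G)$ white, observe that each vertex of $S$ retains its two white neighbors since no edges are removed, and conclude that $S$ stalls immediately in $H$ as well. Your write-up simply makes explicit the details (properness of $S$ in $V(H)$, irrelevance of connectivity) that the paper leaves implicit.
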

\begin{proof}
When adding vertices and edges to $G$ to obtain the graph $H$ we can color all of the vertices in $V(H)-V(G)$ white. Since each member of $S$ is adjacent to two vertices not in $S$, $S$ will not force any vertices in $H$.
\end{proof}

In our next lemma we show that a if a graph $G$ can be extended to contain an additional vertex $v$ and if the extended graph has a failed zero-forcing set $S$ where $v$
is not in $S$ then any number of edges can be added between $v$ and vertices of $G$ and $S$ will still be a failed zero-forcing set of any of these new graphs. 
\begin{lemma}
 Let $G$ be a connected graph with a set of $S$ of blue vertices. If each vertex in $S$ is adjacent to two vertices not in $S$, then for any supergraph $H$ of $G$, we have that $F(H)\geq F(G)$.
 \begin{proof}
 We can add any number of vertices and edges $G$ to create a supergraph $H$. Then color all of the vertices in $V(G)-V(H)$ white. Then since white vertices cannot force any other vertices and the blue vertices are still adjacent to two white vertices, no vertices in the graph will be forced. Hence $F(H)\geq F(G)$.
 \end{proof}
\end{lemma}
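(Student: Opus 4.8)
The plan is to exhibit $S$ itself as a failed zero-forcing set of every supergraph $H$, and then to read the inequality off from $|S|$. First I would make explicit how the hypothesis is meant to be used: we take $S$ to be a \emph{maximum} failed zero-forcing set of $G$, so that $|S|=F(G)$, with the extra feature that no vertex of $S$ is true blue, i.e.\ every $v\in S$ has at least two neighbors in $V(G)\setminus S$. Such an $S$ is automatically stalled in $G$: because no vertex of $S$ has exactly one white neighbor, the forcing rule is never triggered, so the derived (closure) set of $S$ is $S$ itself, and $S\subsetneq V(G)$ since each vertex of $S$ has a white neighbor.

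Next I would fix an arbitrary supergraph $H$ of $G$, so $V(G)\subseteq V(H)$ and $E(G)\subseteq E(H)$, and color $H$ by keeping the vertices of $S$ blue and coloring \emph{every} other vertex of $H$ white; in particular all of $V(H)\setminus V(G)$ is white. The key point is monotonicity of the white neighborhood under this construction: for each $v\in S$, its two $G$-neighbors outside $S$ are still neighbors of $v$ in $H$ and are still white, and passing from $G$ to $H$ only adds neighbors, never removes them, so $v$ has at least two white neighbors in $H$. Hence the forcing rule applies to no vertex of $S$ in $H$, no forcing ever occurs, the closure of $S$ in $H$ is $S$, and this is a proper subset of $V(H)$ because it omits the nonempty set $V(G)\setminus S$. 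Therefore $S$ is a failed zero-forcing set of $H$, so $F(H)\geq |S|=F(G)$.

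There is essentially no obstacle in the argument — it is a one-line monotonicity observation — and the only point requiring care is the bookkeeping just mentioned: the construction delivers $F(H)\geq|S|$, so the stated conclusion $F(H)\geq F(G)$ genuinely needs the assumption that $G$ possesses a maximum failed zero-forcing set with no true blue vertex, which is exactly what the hypothesis provides (the same caveat applies to the preceding lemma). I would also remark that connectedness of $G$, and of $H$, plays no role beyond keeping $F$ in its usual setting, and that this lemma is precisely the mechanism behind the dotted ``extension'' arrows in the proof scheme for Theorem 2.1: once a reduced six- or seven-vertex graph is shown to carry a failed zero-forcing set of the required size with no true blue vertex, the bound $F\ge 3$ propagates back to the original large graph.
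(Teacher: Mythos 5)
Your proof is correct and follows essentially the same route as the paper's: color all vertices of $V(H)\setminus V(G)$ white and observe that each vertex of $S$ retains at least two white neighbors, so no forcing occurs and $S$ remains a failed zero-forcing set of $H$. Your added remark that the conclusion $F(H)\geq F(G)$ requires $S$ to be a \emph{maximum} failed zero-forcing set of $G$ is a fair clarification of a point the paper's statement leaves implicit, but it does not change the argument.
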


\begin{lemma}
If a connected graph $G$ has a pendant $K_3$. Then $F(G)\geq n-2$.
\end{lemma}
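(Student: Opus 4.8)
The plan is to produce an explicit failed zero-forcing set of size $n-2$. Since the Fetcie--Jacob--Saavedra bound $F(G)\le n-2$ holds for every connected graph, this automatically gives equality, but for the lemma only the lower bound is needed.

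First I would read off what the hypothesis provides: a pendant $K_3$ gives a cut-vertex $v$ of degree $3$ together with vertices $u$ and $w$, each of degree $2$, with $u,w,v$ mutually adjacent. Because $\deg u=\deg w=2$ and $u,w$ are already adjacent to each other and to $v$, this forces $N(u)=\{v,w\}$ and $N(w)=\{u,v\}$; moreover $v$ has a third neighbor outside $\{u,w\}$, so $n\ge 4$. In particular $\{u,w\}$ is a module of order $2$, since the only neighbor of $u$ (respectively $w$) outside $\{u,w\}$ is $v$.

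Next, set $S=V(G)\setminus\{u,w\}$, so $|S|=n-2$, and color the vertices of $S$ blue and $u,w$ white. I claim this coloring stalls. The only white vertices are $u$ and $w$, and the only blue vertex adjacent to either of them is $v$; but $v$ is adjacent to \emph{both} $u$ and $w$, so $v$ has two white neighbors and cannot force. Every other blue vertex has all of its neighbors inside $S$, hence all blue, so it cannot force either. Therefore no vertex is ever added, $S$ is a failed zero-forcing set, and $F(G)\ge |S|=n-2$. (Alternatively one could skip the coloring argument and simply invoke the Fetcie--Jacob--Saavedra characterization of $F(G)=n-2$ applied to the module $\{u,w\}$ identified above.)

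There is really no hard step here: the only thing to be careful about is correctly interpreting the degree restrictions in the definition of a pendant $K_3$, so as to conclude that $u$ and $w$ have no neighbors other than $v$ and each other; once that is in hand, the stalled coloring is immediate, since the single ``gateway'' vertex $v$ sees two white vertices simultaneously.
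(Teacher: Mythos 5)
Your proof is correct and takes essentially the same approach as the paper: both exhibit $V(G)\setminus\{u,w\}$ (all vertices except the two degree-$2$ vertices of the pendant triangle) as a stalled set of size $n-2$, with $v$ unable to force because it sees both white vertices. You simply spell out the degree bookkeeping that the paper's one-line proof leaves implicit.
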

	
\begin{proof}
All of the vertices except the two vertices of degree $2$ in the $K_3$ form a failed zero forcing set of $|V(G)|-2$ which is the largest possible for a connected graph.
\end{proof}

\medskip

The following lemma shows that if $G$ is a graph with at least $5$ vertices then $F(G)\geq 3$.

\begin{lemma}
 Let $G$ be a graph with $5$ or more vertices and contains a cut-vertex $v$ such that $G-v$ has at least three components, at least two of which are paths. Then $F(G)\geq 3$.
\end{lemma}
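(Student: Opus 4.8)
The plan is to exhibit, in two exhaustive cases, an explicit set $S$ of at least three vertices on which the forcing rule stalls; since $F(G)$ is the largest size of such a set, this gives $F(G)\ge 3$. Write $C_1,\dots,C_k$ with $k\ge 3$ for the components of $G-v$, and recall that because $G$ is connected, $v$ has at least one neighbour in each $C_i$.

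\emph{Case 1: some component of $G-v$, say $C_1$, has at least two vertices.} I would take $S=\{v\}\cup V(C_1)$ and colour every other vertex white. Every vertex $u\in V(C_1)$ has all of its neighbours in $V(C_1)\cup\{v\}$, since $C_1$ is a component of $G-v$; as all of these lie in $S$, the vertex $u$ is true blue and cannot force. The vertex $v$ has a white neighbour in $C_2$ and another white neighbour in $C_3$ (here the hypothesis $k\ge 3$ is used), so $v$ cannot force either. Hence no vertex can force, the closure of $S$ is $S\ne V(G)$, and $|S|=|V(C_1)|+1\ge 3$, so $F(G)\ge 3$.

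\emph{Case 2: every component of $G-v$ is a single vertex.} Then $G$ is the star $K_{1,n-1}$, and since $n\ge 5$ it has at least four leaves. Pick two leaves $x,y$ and set $S=V(G)\setminus\{x,y\}$. The centre $v$ has the two white neighbours $x$ and $y$, hence cannot force, and each remaining blue vertex is a leaf whose only neighbour is $v\in S$, hence is true blue. Thus $S$ stalls and $|S|=n-2\ge 3$. (Alternatively, $\{x,y\}$ is a module of order $2$, so $F(G)=n-2$ directly by the characterisation of Fetcie, Jacob and Saavedra quoted in the introduction.)

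I do not expect a real obstacle here; the only delicate point is that a blue degree-$1$ or degree-$2$ vertex on a path with exactly one white neighbour would force, which is precisely why in Case 1 the whole of $C_1$ is coloured blue (so its vertices are true blue) and in Case 2 the pendant vertices are left white rather than blue. I would also remark that the assumption ``at least two of the components are paths'' is not actually needed for this argument: it uses only that $v$ is a cut-vertex whose deletion leaves at least three components and that $n\ge 5$.
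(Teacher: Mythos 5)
Your proof is correct and follows essentially the same route as the paper: both take $S=\{v\}$ together with one entire component of $G-v$ having at least two vertices, so that component is all true blue while $v$ is blocked by white neighbours in two further components. Your explicit Case 2 (the star, where every component of $G-v$ is a singleton) cleanly covers a configuration the paper's two cases only handle implicitly, and your observation that the ``two of the components are paths'' hypothesis is never used is accurate.
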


\begin{proof}

We consider two cases.
	
Case (i) $G$ has a cut-vertex $v$ such that $G-v$ consists of three disjoint paths $P_{i}$, $P_{j}$, and $P_{k}$. Then either $i$, $j$, or $k$ must be at least $2$. Without loss of generality suppose $i\geq 3$. Then $v$ along with the vertices of $P_{i}$ form a failed zero forcing set of at least $3$. 
	
Case (ii) $G$ is a graph with $5$ or more vertices and has a cut-vertex $v$ such that $G-v$ consists of a subgraph $H$ with at least two vertices and two disjoint paths. Then $v$ along with the vertices of $H$ form a failed zero forcing set of at least $3$. 
\end{proof}

In our next two lemmas we prove that if a graph $G$ is a graph does contains a pendant V or a pendant $K_3$ then it can be reduced to a subgraph $H$ by removing vertices and edges, so that $H$ does not contain a pendant V or a pendant $K_3$. If all reductions from a graph $G$ lead to a graph $H$ with one of these two pendant subgraphs, then $G$ contained one of these two pendant subgraphs.

\begin{lemma}[pendant V]
 Let $G$ be a connected graph with at least five vertices that does not contain a pendant V. Then $\left\vert V(G)\right\vert -5$ vertices of $G$ can be successively removed (along with incident edges) to create a graph $G^{\prime }$ that does not have a pendant V.
\end{lemma}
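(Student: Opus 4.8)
The plan is to induct on $n=|V(G)|$. When $n=5$ there is nothing to delete and we take $G'=G$, so assume $n\ge 6$. It suffices to produce a single vertex $v$ such that $G-v$ is connected and still has no pendant V; the inductive hypothesis applied to $G-v$ (which has $n-1\ge 5$ vertices) then yields the desired $G'$. Now $G-v$ is connected exactly when $v$ is not a cut-vertex, and every connected graph on at least two vertices has at least two non-cut-vertices (e.g.\ the leaves of a spanning tree, matching the leaf-removal process described before the lemmas), so there is always at least one candidate. The content of the lemma is therefore: \emph{some} non-cut-vertex $v$ of $G$ is ``safe'', meaning $G-v$ contains no pendant V.

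I would establish this by contradiction: assume that deleting \emph{every} non-cut-vertex of $G$ creates a pendant V, and deduce that $G$ itself already contains one. The analysis splits on whether $G$ has a vertex of degree $1$. Suppose $u$ has degree $1$ with neighbour $w$ (such a $u$ is automatically a non-cut-vertex). The pendant V created in $G-u$ is centred at some $x$ with $\deg_G(x)\ge 3$. If $x=w$, then since $u$ is isolated in $G-w$, the graph $G-w$ has all the path-components of $(G-u)-w$ plus the extra path-component $\{u\}$, so $G$ already has a pendant V centred at $w$. If $x\ne w$ and re-attaching $u$ at $w$ leaves the component of $(G-u)-x$ containing $w$ a path (in particular if $w$ is an endpoint of that component, or $w$ lies in a non-path component), then the pendant V at $x$ persists in $G-x$, so $G$ again has a pendant V. The remaining possibility is that $w$ is an interior vertex of a path-component $C$ of $(G-u)-x$; here I would argue that an endpoint of $C$, or a suitable degree-$\le 2$ vertex adjacent to one, can be deleted in place of $u$ without disconnecting $G$ or creating a pendant V, contradicting the standing assumption. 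When $G$ has minimum degree at least $2$, I would run the parallel argument with $v$ taken to be a non-cut-vertex lying in a pendant block of the block--cut tree of $G$ (a vertex on a cycle of $G$); one shows that if deleting every such $v$ produced a pendant V, then tracing where that pendant V sits forces one already to exist in $G$, with the only genuinely delicate instance being a pendant block that is a triangle.

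The main obstacle is precisely the sub-case in which a degree-$1$ vertex is attached to the interior of a path that is one vertex short of being a pendant branch: deleting that leaf \emph{uncovers} a pendant V that was genuinely absent from $G$, so one must switch to deleting a different vertex, and one then has to check that the replacement is still a non-cut-vertex and that its removal is safe. Organising this cleanly when a single high-degree vertex carries several such near-pendant branches, and dovetailing the degree-$1$ case with the block--cut-tree case so that the induction never stalls, is where the care is needed; once the case split is fixed, each individual check is short.
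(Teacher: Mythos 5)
Your overall strategy is the one the paper uses --- show that some single vertex can be deleted so that the result is still connected and pendant-V-free, then iterate --- and your case structure (leaves versus $\delta(G)\ge 2$, and tracking where the new pendant V would have to sit) is sound. But the proposal defers exactly the steps that carry the content of the lemma, so there is a genuine gap. In the leaf case you correctly reduce to the one dangerous configuration, where the leaf's neighbour $w$ is an \emph{interior} vertex of a path component $C$ of $(G-u)-x$, and then write that you ``would argue'' an endpoint of $C$ or a nearby degree-$\le 2$ vertex is safe to delete instead. That is precisely the claim that needs proof, and it is not automatic: the endpoints of $C$ may both be adjacent to $x$ (so that $C\cup\{x\}$ is a cycle), several such near-pendant branches and genuine pendant paths may hang off the same vertex $x$, and for each proposed substitute you must re-verify both non-cut-ness and that its deletion does not create a pendant V centred somewhere else entirely. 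Likewise the whole $\delta(G)\ge 2$ case is compressed into ``one shows that tracing where that pendant V sits forces one already to exist in $G$,'' which is a statement of intent rather than an argument. As written, the proposal is a correct plan with the two load-bearing verifications missing.

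For what it is worth, your instinct that the leaf case is not trivial is sharper than the paper's own treatment, which dismisses it in one sentence by asserting that removing a degree-$1$ vertex never creates a pendant V. That assertion is false: take a $4$-cycle $x d_1 d_2 d_3 x$ with a pendant path $x p_1 p_2 p_3$ and a leaf $u$ attached to $d_2$. This connected $8$-vertex graph has no pendant V (the only degree-$\ge 3$ vertices are $x$ and $d_2$, and deleting either leaves only one path component), yet $G-u$ has a pendant V at $x$; one must instead delete, say, $p_3$. So the sub-case you flagged is exactly where the real work lies, and neither your sketch nor the paper's proof actually does it. For the $\delta(G)\ge 2$ case the paper at least gives a concrete recipe --- take a cycle containing a vertex $w$ of degree at least $3$ and delete a degree-$2$ vertex of that cycle adjacent to $w$ --- which you could adopt in place of the block--cut-tree language, but even that recipe needs justification (why such a degree-$2$ neighbour exists on a suitably chosen cycle, and why its removal is safe), so in either formulation the case must still be argued in full.
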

\begin{proof}
We will show that if a graph $G$ does not have a pendant V, then there is some vertex we can remove from $G$ so that the resulting graph does not have a pendant V. Let $G$ be a graph that is not a cycle and does not have a pendant $V$ subgraph. We want to show there is always some vertex $v$ whose removal does not leave a pendant $V$ subgraph. We can assume that $\delta (G)\geq 2$ since we could remove a vertex of degree $1$ and not create a pendant $V$ subgraph. Since $\delta (G)\geq 2$, $G$ must contain a cycle $C$ that has no pendant paths. Since $G\neq C$ the cycle $C$ must contain a vertex $w$ with degree at least $3$. Removing a vertex on $C$ that has degree 2 is adjacent to $w$ will not leave a pendant $V$.
\end{proof}

\begin{lemma}[pendant triangle]
 Let $G$ be a connected graph with at least five vertices that does not contain a pendant $K_{3}$. Then $\left\vert V(G)\right\vert -5$ vertices of $G$ can be successively removed (along with incident edges) to create a graph $G^{\prime }$ that does not have a pendant $K_{3}$.
\end{lemma}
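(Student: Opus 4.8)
The plan is to follow the template of the proof of Lemma~2.6, allowing ourselves to delete one \emph{or} two vertices per step. For $|V(G)|=5$ there is nothing to prove, so it suffices to establish the one-step statement: if $G$ is connected, $|V(G)|\ge 6$, and $G$ has no pendant $K_3$, then one or two non-cut-vertices of $G$ can be deleted so as to leave a connected graph on at least five vertices that again has no pendant $K_3$. Since deleting non-cut-vertices is exactly the ``delete a leaf of a spanning tree'' operation, connectedness is preserved, and an induction on $|V(G)|$ removes $|V(G)|-5$ vertices in total. Throughout I use the reformulation that a pendant $K_3$ is precisely a leaf block isomorphic to $K_3$ whose cut-vertex has degree $3$ (equivalently, has exactly one further, necessarily bridge, edge).

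If $G$ has a leaf $\ell$ with neighbour $x$, first try deleting $\ell$: this changes only $\deg(x)$ and creates no new cut-vertex, so if $G-\ell$ has a pendant $K_3$ then $x$ is either its centre (forcing $\deg_G(x)=4$, with $x$ adjacent to $\ell$, to one more vertex, and to two mutually adjacent degree-$2$ vertices) or one of its degree-$2$ triangle vertices; in either case deleting, instead, a degree-$2$ vertex of that triangle other than $x$ works, because such a vertex lies inside a $K_3$ block of $G$, so removing it destroys the triangle and creates no new cut-vertex. If $\delta(G)\ge 2$ and $G$ is $2$-connected, then $G$ has no pendant $K_3$ automatically, and the easy subcases are immediate: if $G$ is a cycle, delete any vertex; if $G$ is triangle-free, delete any vertex (no triangle can appear); and if $G$ has a triangle containing a degree-$2$ vertex $b$, delete $b$ (its two neighbours are adjacent, so $G-b$ stays $2$-connected, hence has no pendant $K_3$). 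A short separate argument settles the remaining $2$-connected graphs.

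If $\delta(G)\ge 2$ and $G$ is not $2$-connected, pick a leaf block $B$; it is $2$-connected on at least three vertices, with cut-vertex $c$. If $B$ is a triangle, its two non-cut vertices have degree $2$ and, since $G$ has no pendant $K_3$, $\deg_G(c)\ge 4$; delete one of the two. If $B=C_k$ with $k\ge 4$, delete a vertex of $B$ that is neither $c$ nor adjacent to $c$. If $B$ is neither, it contains a vertex $y\ne c$ with $\deg_B(y)\ge 3$; delete such a $y$, chosen with care in the small cases (e.g.\ when $B$ is a diamond, take $y$ the degree-$3$ internal vertex). In each of these the degree changes are confined to vertices internal to $B$, the degree of $c$ is preserved or irrelevant, and no cut-vertex outside $B$ is created, so a routine check shows no pendant $K_3$ appears.

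The single genuinely obstructed case --- and the main difficulty --- is a $K_4$ leaf block $B$ whose cut-vertex $c$ has degree exactly $4$: deleting any internal vertex of $B$ turns $B$ into a $K_3$ leaf block with a degree-$3$ cut-vertex, i.e.\ manufactures a pendant $K_3$, and if every leaf block of $G$ is of this type there is no single safe deletion anywhere. The escape is that, when $\delta(G)\ge 2$, such a $B$ forces $|V(G)|\ge 7$ (there is no room for it on six vertices), so we may delete \emph{two} internal vertices of $B$ at once, leaving $c$ incident to a single pendant edge and having degree $2$; the result has at least five vertices and, since only vertices of $B$ changed, no pendant $K_3$. The remaining work in a full write-up is the repeated, routine verification that each prescribed deletion creates neither a new cut-vertex nor a new degree-$3$ centre --- essentially, understanding what a leaf block looks like after one of its vertices is deleted --- which is exactly where the argument becomes more delicate than that of Lemma~2.6.
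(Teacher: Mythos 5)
Your proof takes a genuinely different route from the paper's. The paper argues locally: it supposes that deleting $v$ creates a pendant triangle on $\{b,c,d\}$ with outside neighbour $a$, and runs cases on which of $a,b,c,d$ the vertex $v$ is adjacent to, exhibiting in each case an alternative vertex to delete. You instead organize the argument around the block decomposition, reading a pendant $K_3$ as a triangle leaf block with a degree-$3$ cut-vertex. The most valuable thing your approach buys is the explicit identification of the obstructed configuration: a $K_4$ leaf block whose cut-vertex has degree exactly $4$. This is not pedantry --- it is a real counterexample to the one-vertex-at-a-time claim. Take two copies of $K_4$ joined by a single edge between them ($8$ vertices, no pendant $K_3$): deleting any non-cut-vertex creates a pendant $K_3$, and deleting either cut-vertex disconnects the graph, so \emph{no} single safe deletion exists. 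The paper's case analysis silently omits exactly this situation (its cases (i)--(v) never treat $v$ adjacent to $b$, $c$, $d$ but not $a$, among several other missing adjacency patterns), so your two-vertices-at-a-time escape, together with the observation that the obstruction forces at least $7$ vertices so the count down to $5$ still works, is a genuine repair rather than an alternative proof of the same statement.

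That said, your write-up still has concrete gaps. First, the ``short separate argument'' for the remaining $2$-connected graphs (not a cycle, containing a triangle, but with every triangle vertex of degree at least $3$) is asserted, not given; it is true --- one can show that if deleting $v$ creates a pendant triangle then $v$ lies in a gadget $\{v,u,w,m\}$ with $N(u)=\{v,w,m\}$ and $N(w)=\{v,u,m\}$, and that deleting $u$ instead cannot create one without violating $2$-connectivity or the vertex count --- but this needs to be written out. Second, your claim that deleting a vertex internal to a leaf block $B$ confines all relevant changes to $B$, hence creates no pendant $K_3$, overlooks the cut-vertex $c$ itself: if $c$ is simultaneously the degree-$4$ apex of a triangle of two degree-$2$ vertices hanging off it in \emph{another} block, then lowering $\deg(c)$ to $3$ manufactures a pendant $K_3$ outside $B$. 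So the algorithm must prioritize which leaf block to process (handle triangle leaf blocks first), and this ordering should be stated. Third, ``chosen with care in the small cases'' is doing real work: for instance, if $B$ is a $K_4$ on $\{m,p,q,r\}$ with an ear $m\mbox{--}c\mbox{--}p$ attaching the cut-vertex $c$, then deleting the degree-$4$ vertex $m$ creates the pendant triangle $\{p,q,r\}$, while deleting $q$ is safe; a uniform rule (e.g., always delete a vertex of $B$ at maximum distance from $c$ among those of minimum degree in $B$, or an explicit case split) is needed to make this a proof rather than a plan.
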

\begin{proof}

We will show that if $G$ does not contain a pendant triangle, there is always some vertex we can remove so that the resulting graph does not contain a pendant triangle. Let $G$ be a graph without a pendant triangle, but $G-v$ contains a pendant triangle with vertices $a$,$b$,$c$, and $d$ where $a$ and $b$ are adjacent, and $b$,$c$, and $d$ are all mutually adjacent, and $c$ and $d$ are not adjacent to $a$. Since $G$ has more than $5$ vertices we can assume that either $d(a)>2$ or  $d(v)>2$. We will consider various cases and show in each case that a different vertex can be removed from $G$ that will not result in a pendant triangle.
	
Case (i):\ $v$ is adjacent to $a$,$b$,$c$, and $d$. Removing $d$ leaves an induced $K_{4}-e$ on vertices $a$,$v$,$b$, and $c$.
	
Case (ii):\ $v$ is adjacent to $a$,$b$, and $c$ only. Then removing $b$ results in a path on vertices $a$,$v$,$c$, and $d$.
	
Case (iii):\ $v$ is adjacent to $c$ and $d$ only. Then removing $c$ results in a path on vertices $v$,$d$,$b$, and $a$.
	
Case (iv):\ $v$ is adjacent to $c$ only. Then removing $d$ results in a path on vertices $v$,$c$,$b$, and $a$.
	
Case (v):\ $v$ is adjacent to $d$ only. Then removing $d$ results in a path on vertices $v$,$c$,$b$, and $a$.
\end{proof}

\medskip

\subsection{Graphs with six vertices}

We will consider the 112 connected graphs with six vertices. We will refer to the table and numbering found in \cite{Cvetković}. For convenience, we explicitly show each graph in each of the individual cases.

Out of $112$ different graphs, there were 16 cases where there does not exist three blue vertices with at most one true blue vertex. We will show that show that all possible extensions of these exception cases have three blue vertices with none being a true blue vertex. This will not only show that they have a failed zero-forcing number of at least 3, but they can be extended indefinitely by adding vertices and edges and the resulting supergraph will also have a failed zero-forcing number of at least 3.

These graphs are the exceptions: 54,58,59,60,76,77,80,85,86,87,96,98,102,103,105,112, each of which we will present individually.

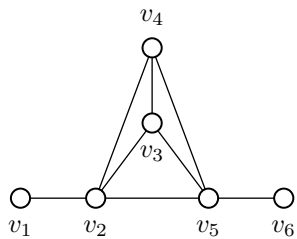
\begin{figure}[h!]
\begin{center}
\begin{tikzpicture}[node distance = 1cm, line width = 0.5pt]
\coordinate (1) at (0,0);
\coordinate (2) at (1,0);
\coordinate (3) at (2.5,0);
\coordinate (4) at (1.75,1);
\coordinate (5) at (1.75,2);
\coordinate (6) at (3.5,0);


\draw (1)--(2);
\draw (2)--(3);
\draw (2)--(4);
\draw (2)--(5);
\draw (3)--(6);
\draw (3)--(4);
\draw (3)--(5);
\draw (4)--(5);

\foreach \point in {1,2,3,4,5,6} \fill (\point) circle (4pt);

\filldraw [white] 
(1.75,1) circle (3pt)
(3.5,0) circle (3pt)
(1.75,2) circle (3pt)
(0,0) circle (3pt)
(1,0) circle (3pt)
(2.5,0) circle (3pt);

\node (A) at (0,-0.4) {$v_1$};
\node (B) at (1,-0.4) {$v_2$};
\node (C) at (2.5,-0.4) {$v_5$};
\node (D) at (1.75,2.4) {$v_4$};
\node (E) at (3.5,-0.4) {$v_6$};
\node (F) at (1.75,0.6) {$v_3$};

\end{tikzpicture}
\end{center}
\caption{Graph 54}
\end{figure}

\textbf{Graph 54:} 
We begin by extending graph 54 by one vertex, $v$ and connecting it to each vertex in G. Notice there will be no case where $v$ is adjacent to $v_2$ or $v_5$. This is because connecting a vertex to either one will create a cherry structure and that case is covered by our cherry lemma. Our goal with extending is to find no true blue vertex within the graph. It is also important to notice that in each case, we are able to find a failed zero-forcing number of three.

If $v$ is adjacent to $v_1$, then $v_1,v_3,$ and $v_4$ can be filled.

If $v$ is adjacent to $v_6$, then $v_3,v_4,$ and $v_6$ can be filled.

If $v$ is adjacent to $v_4$, then $v_2,v_5,$ and $v_6$ can be filled. Notice that $v_6$ is true blue, so this case must be extended further. Suppose we add a new vertex called $u$.

If $u$ is adjacent to $v_3$, then $v_3,v_4,$ and $v_5$ can be filled.

If $u$ is adjacent to $v_6$, then $v_3,v_4,$ and $v_6$ can be filled.

If $u$ is adjacent to $v$, then $v,v_2,$ and $v_5$ can be filled. Since all sub-cases for this case have no true blue vertex, then this case is resolved. But there remains one more case.

If $v$ is adjacent to $v_3$, then $v_2,v_5,$ and $v_6$ can be filled. Notice that $v_6$ is true blue, so this case must be extended further. Suppose we add a new vertex called $u$.

If $u$ is adjacent to $v$, then $v,v_2,$ and $v_5$ can be filled.

If $u$ is adjacent to $v_6$, then $v_3,v_4,$ and $v_6$ can be filled.

If $u$ is adjacent to $v_4$, then $v_3,v_4,$ and $v_5$ can be filled. Notice in each sub-case there are no true blue vertices. Thus this case is resolved and can be extended. Since all cases can be extended indefinitely, this implies that graph 54 can be extended indefinitely without being forced with $F(G) \geq 3$.

\begin{figure}[h!]
    \begin{center}
        \begin{tikzpicture}
        \coordinate (1) at (1,1);
        \coordinate (2) at (1,0.5);
        \coordinate (3) at (2,0); 
        \coordinate (4) at (1.5,-1);
        \coordinate (5) at (0.5,-1);
        \coordinate (6) at (0,0);
        \draw \foreach \x [remember=\x as \lastx (initially 1)] in {1,2,3,4,5,6}{(\lastx) -- (\x)};
        \draw (2) -- (6);
        \draw (2) -- (4);
        \draw (2) -- (5);
        \foreach \point in {3,4,6} \fill (\point) circle (4pt);
        \foreach \point in {1,2,5} \fill (\point) circle (4pt);
   
        \filldraw [white]
        (0,0) circle (3 pt)
        (2,0) circle (3 pt) 
        (1.5,-1) circle (3 pt)
        (1,1) circle (3 pt) 
        (1,0.5) circle (3 pt) 
        (0.5, -1) circle (3 pt);
        
        \node (A) at (0,0.4) {$v_6$};
        \node (B) at (1,1.4) {$v_1$};
        \node (C) at (1.4,0.6) {$v_2$};
        \node (D) at (2,0.4) {$v_3$};
        \node (E) at (1.9,-1) {$v_4$};
        \node (F) at (0.1,-1) {$v_5$};
        \end{tikzpicture}
        \caption{Graph 58}
    \end{center}
\end{figure}
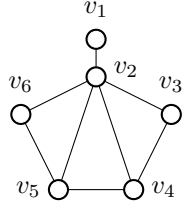

\textbf{Graph 58: } Let $v_1$ be the vertex of degree $1$ and let $v_2$ be the neighbor of $v_1$; $v_4$ is the vertex of degree $3$ that is adjacent to $v_2$, $v_3$, and $v_5$. Let $v_5$ be the vertex of degree $3$ that is adjacent to $v_6$; vertices $v_6$ and $v_3$ both have degree 2.  With graph 58, although we have one True blue vertex, we can extend the graph along vertices $v_1, v_3$ and $v_6$ and be left without any true blue vertices thus being able to extend indefinitely with $F(G) \geq 3$. 
Extending along vertices $v_4$ and $v_5$ will give us one true blue vertex.
We can then add an additional vertex or an edge and with reshuffling the filled in vertices, that will give us $F(G) \geq 3$ without a true blue vertex. For the sake of completeness we include the details.  We consider adding a new vertex $v$ with possible neighbors. We note that $v$ cannot have degree $1$ and be adjacent to $v_2$ since this would create a cherry. In each case of the remaining cases we can find a coloring with $3$ blue vertices, none of which are true blue vertices, and all of the remaining vertices including $v$ will be colored white. If $v$ is adjacent to $v_1$, then we could color $v_1$, $v_3$, and $v_5$ blue. If $v$ is adjacent to $v_3$, then we could color $v_2$, $v_3$, and $v_5$ blue. If $v$ is adjacent to $v_6$, then we could color $v_1$, $v_5$, and $v_6$ blue. The only problem cases is where $v$ is adjacent to $v_4$ or $v_5$. Here we can find a failed zero forcing set $S$ of size $3$ by selecting $v_1$, $v_2$, and $v_4$, however we may not be able to extend $S$ to larger graphs.  Without loss of generality assume that $v$ is adjacent to $v_4$. We consider adding a new vertex $u$ with possible neighbors. We note that $u$ cannot have degree $1$ and be adjacent to $v_2$ or $v_4$ since this would create a cherry. If $u$ is adjacent to $v$ then we could color $v$, $v_3$, and $v_5$ blue. If $u$ is adjacent to $v_1$ then we could color $v_1$, $v_3$, and $v_5$ blue. If $u$ is adjacent to $v_3$ then we could color $v_2,$ $v_4$, and $v_5$ blue. If $u$ is adjacent to $v_5$ then we could color $v_2,$ $v_4$, and $v_5$ blue. If $u$ is adjacent to $v_6$ then we could color $v_3,$ $v_5$, and $v_6$ blue.

\begin{figure}[h!]
\begin{center}
\begin{tikzpicture}[node distance = 1cm, line width = 0.5pt]
\coordinate (1) at (0.5,0);
\coordinate (2) at (1,-1);
\coordinate (3) at (2.0,-1);
\coordinate (4) at (2.5,0);
\coordinate (5) at (1.5,1);
\coordinate (6) at (3.5,-1);

\draw (6)--(3);
\draw (5)--(1);
\draw (5)--(4);
\draw (5)--(3);
\draw (5)--(2);
\draw (1)--(2);
\draw (2)--(3);
\draw (3)--(4);

\foreach \point in {1,2,3,4,5,6} \fill (\point) circle (4pt);

\filldraw [white] 
(0.5,0) circle (3pt)
(1.5,1) circle (3pt)
(2.5,0) circle (3pt)
(1,-1) circle (3pt)
(2.0,-1) circle (3pt)
(3.5,-1) circle (3pt);

\node (A) at (0,0) {$v_6$};
\node (B) at (0.6,-1) {$v_5$};
\node (C) at (2.4,-0.7) {$v_4$};
\node (D) at (2.5,0.4) {$v_3$};
\node (E) at (1.1,1) {$v_2$};
\node (F) at (3.5,-0.6) {$v_1$};


\end{tikzpicture}
\end{center}
\caption{Graph 59}
\end{figure}
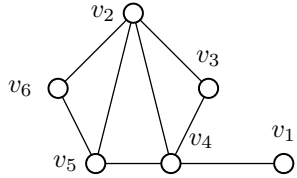

\textbf{Graph 59: } Let $v_1$ be the vertex of degree $1$, $v_4$ be the vertex of degree $4$ that is adjacent to $v_1$, $v_3$ be the vertex of degree $2$ that is adjacent to $v_4$, $v_2$ be the other vertex of degree $4$, $v_5$ be the vertex of degree $3$ that is adjacent to $v_2$ and $v_4$, and $v_6$ be the vertex of degree $2$ that is adjacent to $v_2$ and $v_5$. Graph 59 follows with the case of graph 58. We can extend along any vertex without any True Blue vertices with the only exception being when a new vertex is adjacent to vertex $v_3$. Similarly to graph 58 however, upon any additional extension, there are no issues arising when all the vertices are reshuffled.

For the sake of completeness we include the details.  We consider adding a new vertex $v$ and possible neighbors of $v$. The vertex $v$ cannot have degree $1$ and be adjacent to $v_4$ since this would create a cherry. In each case of the remaining cases we can find a coloring with $3$ blue vertices, none of which are true blue vertices, and all of the remaining vertices including $v$ will be colored white. If $v$ is adjacent to $v_1$, then $v_1,$ $v_3$, and $v_5$ can be colored blue. If $v$ is adjacent to $v_3$, then $v_3,$ $v_4$, and $v_6$ can be colored blue. If $v$ is adjacent to $v_5$ then $v_2,$ $v_4,$ and $v_5$ can be colored blue. If $v$ is adjacent to $v_6$, then $v_2$, $v_4,$ and $v_6$ can be colored blue. The only problem case is if $v$ is adjacent to $v_2$. Here we can find a failed zero-forcing set $S$ of size $3$ with vertices $v_1$, $v_4$, and $v_6$, but we many not be able to extend this graph. Here we consider adding another vertex $u$. The vertex $v$ cannot have degree $1$ and be adjacent to either  $v_2$ or $v_4$ since this would create a cherry. If $u$ is adjacent to $v$, then we color $v$,$v_3$, and $v_5$ blue. If $u$ is adjacent to $v_1$, then we color $v_1$, $v_3$, and $v_5$ blue. If $u$ is adjacent to $v_3$, then we color $v_2$, $v_3$, and $v_5$ blue. If $u$ is adjacent to $v_5$, then we color $v_2$, $v_4$, and $v_5$ blue. If $u$ is adjacent to $v_6$, then we color $v_2$,$v_4$, and $v_6$ blue.

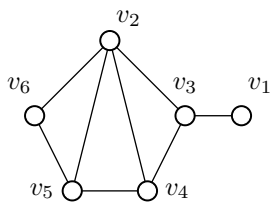
\begin{figure}[h!]
\begin{center}
\begin{tikzpicture}[node distance = 1cm, line width = 0.5pt]
\coordinate (1) at (0.5,0);
\coordinate (2) at (1,-1);
\coordinate (3) at (2.0,-1);
\coordinate (4) at (2.5,0);
\coordinate (5) at (1.5,1);
\coordinate (6) at (3.25,0);

\draw (6)--(4);
\draw (5)--(1);
\draw (5)--(4);
\draw (5)--(3);
\draw (5)--(2);
\draw (1)--(2);
\draw (2)--(3);
\draw (3)--(4);

\foreach \point in {1,2,3,4,5,6} \fill (\point) circle (4pt);

\filldraw [white] 
(1,-1) circle (3pt)
(2.0,-1) circle (3pt)
(1.5,1) circle (3pt)
(0.5,0) circle (3pt)
(2.5,0) circle (3pt)
(3.25,0) circle (3pt);

\node (A) at (0.3,0.4) {$v_6$};
\node (B) at (0.6,-1) {$v_5$};
\node (C) at (2.4,-1) {$v_4$};
\node (D) at (2.5,0.4) {$v_3$};
\node (E) at (3.5,0.4) {$v_1$};
\node (F) at (1.75,1.3) {$v_2$};

\end{tikzpicture}
\end{center}
\caption{Graph 60}
\end{figure}

\textbf{Graph 60: } Graph 60 has at least one true blue vertex but all extensions along graph 60 result in $F(G) \geq 3$ and no true blue vertices. Let $v_1$ be the vertex of degree $1$, $v_3$ be the neighbor of $v_1$, $v_2$ be the vertex of degree $4$, $v_4$ be the vertex of degree $3$ that is adjacent to $v_2$ and $v_3$, $v_5$ be the vertex of degree $3$ adjacent to $v_4$, and let $v_6$ be the vertex of degree $2$. We consider adding another vertex $v$ and possible neighbors of $v$. First if $v$ has degree $1$ then it cannot be adjacent to $v_3$ since this would create a cherry. In each of the remaining cases we can find a coloring with $3$ blue vertices, none of which are true blue vertices, and all of the remaining vertices including $v$ will be colored white. If $v$ is adjacent to $v_1$ then we can color $v_1$,$v_4$, and $v_6$ blue, if $v$ is adjacent to $v_2$ then $v_2$, $v_3$, and $v_5$ can be colored blue. If $v$ is adjacent to $v_4$ then $v_3$, $v_4,$ and $v_6$ can be colored blue and if $v$ is adjacent to $v_5$ then $v_2$, $v_3,$ and $v_5$ can be colored blue, and if $v$ is adjacent to $v_6$ then $v_2$, $v_3,$ and $v_6$ can be colored blue.

\begin{figure}[h!]
\begin{center}
\begin{tikzpicture}[node distance = 1cm, line width = 0.5pt]
\coordinate (1) at (0,0);
\coordinate (2) at (1,0);
\coordinate (3) at (2.0,0);
\coordinate (4) at (1.5,0.75);
\coordinate (5) at (1.5,-0.75);
\coordinate (6) at (3,0);

\draw (1)--(2);
\draw (2)--(4);
\draw (2)--(5);
\draw (5)--(3);
\draw (4)--(3);
\draw (3)--(6);
\draw (2)--(3);

\node (A) at (0,0.35) {$v_1$};
\node (B) at (0.95,0.35) {$v_2$};
\node (C) at (1.5,1.1) {$v_3$};
\node (D) at (1.5,-1.1) {$v_4$};
\node (E) at (2.05,0.35) {$v_5$};
\node (F) at (3,.35) {$v_6$};

\foreach \point in {1,2,3,4,5,6} \fill (\point) circle (4pt);

\filldraw [white] 
(1.5,-0.75) circle (3pt)
(1.5,0.75) circle (3pt)
(3,0) circle (3pt)
(0,0) circle (3pt)
(1,0) circle (3pt)
(2,0) circle (3pt);

\end{tikzpicture}
\end{center}
\caption{Graph 76}
\end{figure}
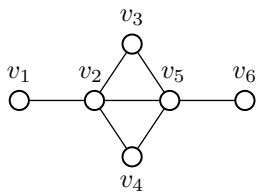

\textbf{Graph 76:} We will show that all but two extensions of $G$ to a graph with 7 vertices will have no true blue vertex of degree 1. Let $v$ denote the new vertex. Note that extensions which would result in a cherry are neglected as those cases are resolved by Lemma 2.6. We have the following cases

\medskip

If $v$ is adjacent to $v_1$ only, we color $v_1$,$v_3$, and $v_4$ blue and $v$, $v_2$,$v_5$ and $v_6$ white. 


\medskip

Note each case above is analogous to another extension of $G$ due to its symmetry. Now we show that the two extensions of $G$ which do contain a true blue vertex of degree 1 may be further extended to remove said true blue vertex. These two extensions are the same by symmetry, and so we consider them as one case. Let our vertices be named as before and include $v_7$ as a vertex of degree 1 adjacent to $v_3$.

\medskip

If $v$ is adjacent to $v_1$ only, we color $v_1$, $v_3$, and $v_4$ blue and $v$,$v_2$,$v_5$,$v_6$ and $v_7$ white.



If $v$ is adjacent to $v_4$ only, we color $v_2$,$v_3$, and $v_4$ blue and $v$,$v_1$,$v_5$,$v_6$ and $v_7$ white.

If $v$ is adjacent to $v_7$, we color $v_2$,$v_5$, and $v_7$ blue and $v$,$v_1$,$v_3$,$v_4$ and $v_6$ white.

\medskip

The remaining cases of connecting $v$ to this extension of $G$ are analogous to one of the previous five. Notice we can always color $v$ white and add edges between $v$ and any subset of the remaining vertices without affecting the number of blue vertices by Lemma 2.4.

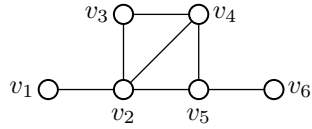
\begin{figure}[h!]
\begin{center}
\begin{tikzpicture}[node distance = 1cm, line width = 0.5pt]
\coordinate (1) at (0,0);
\coordinate (2) at (1,0);
\coordinate (3) at (1,1);
\coordinate (4) at (2,1);
\coordinate (5) at (2,0);
\coordinate (6) at (3,0);

\draw (1)--(2);
\draw (2)--(4);
\draw (2)--(5);
\draw (5)--(4);
\draw (4)--(3);
\draw (5)--(6);
\draw (2)--(3);

\node (A) at (-0.35,0) {$v_1$};
\node (B) at (1,-0.35) {$v_2$};
\node (C) at (0.65,1) {$v_3$};
\node (D) at (2.35,1) {$v_4$};
\node (E) at (2,-0.35) {$v_5$};
\node (F) at (3.35,0) {$v_6$};

\foreach \point in {1,2,3,4,5,6} \fill (\point) circle (4pt);

\filldraw [white] 
(2,0) circle (3pt)
(1,1) circle (3pt)
(3,0) circle (3pt)
(0,0) circle (3pt)
(1,0) circle (3pt)
(2,1) circle (3pt);

\end{tikzpicture}
\end{center}
\caption{Graph 77}
\end{figure}

\textbf{Graph 77:}
 Suppose we add a new vertex called $v$. Due to the symmetry of the graph, it is not necessary to extend on $v_5$ or $v_1$. Our goal with extending is to find no true blue vertex. If we do have a true blue, we must then extend further.

If $v$ is adjacent to $v_6$, then $v_2,v_4,$ and $v_6$ can be filled.

If $v$ is adjacent to $v_3$, then $v_2,v_3,$ and $v_5$ can be filled.

If $v$ is adjacent to $v_1$, then $v_1,v_3,$ and $v_5$ can be filled.

If $v$ is adjacent to $v_4$, then $v_2,v_4,$ and $v$ can be filled. For this case, we must extend further since there is a true blue vertex. Suppose we add a second vertex called $u$.

If $u$ is adjacent to $v_3$, then $v_2,v_3,$ and $v_5$ can be filled.

If $u$ is adjacent to $v$, then $v,v_3,$ and $v_5$ can be filled.

Thus we can extend all cases indefinitely without ever forcing G. Since each case is extendable, then graph 77 is extendable.

 Notice there is no case where $v$ is adjacent to $v_2$ or $v_5$. This is because connecting a vertex to either one would create a cherry structure and that case is covered elsewhere. It is also important to notice that in each case, we are able to find a failed zero-forcing number of three. 
 
 Every blue vertex has two white neighbors, so we can add any number of edges or vertices to any these graphs without forcing G. This is because the three blue vertices will always be adjacent to two white neighbors and cannot force any other vertex. Thus $F(G) \geq 3$ for graph 77.

\begin{figure}[h!]
\begin{center}
\begin{tikzpicture}[node distance = 1cm, line width = 0.5pt]
\coordinate (1) at (0,0);
\coordinate (2) at (1,0);
\coordinate (3) at (2.0,0);
\coordinate (4) at (1.5,0.75);
\coordinate (5) at (1.5,-0.75);
\coordinate (6) at (3,0);

\node (A) at (0,0.4) {$v_1$};
\node (B) at (0.9,0.4) {$v_2$};
\node (C) at (1.5,1.1) {$v_3$};
\node (D) at (1.5,-1.1) {$v_4$};
\node (E) at (2.1,0.4) {$v_5$};
\node (F) at (3,0.4) {$v_6$};

\draw (1)--(2);
\draw (2)--(4);
\draw (2)--(5);
\draw (5)--(3);
\draw (4)--(3);
\draw (3)--(6);
\draw (4)--(5);

\foreach \point in {1,2,3,4,5,6} \fill (\point) circle (4pt);

\filldraw [white] 
(1.5,-0.75) circle (3pt)
(1.5,0.75) circle (3pt)
(3,0) circle (3pt)
(0,0) circle (3pt)
(1,0) circle (3pt)
(2,0) circle (3pt);

\end{tikzpicture}
\end{center}
\caption{Graph 80}
\end{figure}
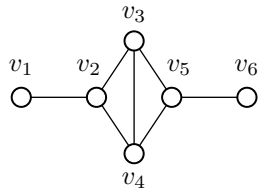

\textbf{Graph 80:} 
Let the vertex on the far left be $v_1$. Let the vertex adjacent to $v_1$ be $v_2$. Let the top vertex adjacent to $v_2$ be $v_3$ and the bottom be $v_4$. Let the vertex that is adjacent to both $v_3$ and $v_4$ on the right be $v_4$. And finally, let the vertex on the far right side be $v_6$. We begin by extending graph 80 by one vertex, $v$ and connecting $v$ to every vertex in $G$. Notice there will be no case where $v$ is adjacent to $v_2$ or $v_5$. This is because connecting a vertex to either would create a cherry structure and that case is covered elsewhere.

If $v$ is adjacent to $v_6$, then $v_3,v_4,$ and $v_6$ can be filled.

If $v$ is adjacent to $v_4$, then $v_2,v_4,$ and $v_5$ can be filled.

If $v$ is adjacent to $v_3$, then $v_2,v_3,$ and $v_5$ can be filled.

If $v$ is adjacent to $v_1$, then $v_1,$ $v_3$, and $v_4$ can be filled.

In all cases there are no true blue vertices. Thus, all cases are extendable and this implies graph 80 can be extended indefinitely. It is also important to notice that in each case, we are able to find a failed zero-forcing number of three.

  \begin{figure}[h!]
\begin{center}
\begin{tikzpicture}[node distance = 1cm, line width = 0.5pt]

\coordinate (1) at (0,1);
\coordinate (2) at (0.2,0);
\coordinate (3) at (0.7,1.6);
\coordinate (4) at (1.4,1);
\coordinate (5) at (1.2,0);
\coordinate (6) at (2.3,1);

\draw (1)--(4);
\draw (2)--(5);
\draw (2)--(1);
\draw (1)--(3);
\draw (4)--(3);
\draw (6)--(4);
\draw (5)--(4);

\node (A) at (2.3,1.3) {$v_1$};
\node (B) at (1.42,1.3) {$v_2$};
\node (C) at (0.7,1.9) {$v_3$};
\node (D) at (0,1.3) {$v_4$};
\node (E) at (-0.15,0) {$v_5$};
\node (F) at (1.55,0) {$v_6$};

\foreach \point in {1,2,3,4,5,6} \fill (\point) 
circle (4pt);

\filldraw [white] 
(0,1) circle (3pt)
(0.2,0) circle (3pt)
(0.7,1.6) circle (3pt)
(1.4,1) circle (3pt)
(2.3,1) circle (3pt)
(1.2,0) circle (3pt);

\end{tikzpicture}
\end{center}
\caption{Graph 85}
\end{figure}
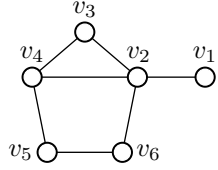

\textbf{Graph 85:} We will show that all but two extensions of $G$ to a graph with 7 vertices have no true blue vertex of degree 1. Let $v$ denote the new vertex. Note that extensions which would result in a cherry are neglected as those cases are resolved by Lemma 2.6. We consider the following cases.

\medskip

If $v$ is adjacent to $v_6$ only, we color $v_2$, $v_6$, and $v_4$ blue and $v$,$v_1$, $v_3$, and $v_5$ white.

If $v$ is adjacent to $v_1$ only, we color $v_1$,$v_3$, and $v_6$ blue and $v$,$v_2$,$v_4$, and $v_5$ white.


If $v$ is adjacent to $v_3$ only, we color $v_2$,$v_3$, and $v_5$ blue and $v$, $v_1$,$v_4$, and $v_6$ white. 

\medskip

Now we show the two extensions of $G$ which do contain a true blue vertex of degree 1 may be further extended to remove said true blue vertex. For the first extension, let our vertices be named as before, and include $v_7$ as a vertex of degree 1 adjacent to $v_5$.

\medskip

If $v$ is adjacent $v_1$ only, we color $v_1$,$v_3$, and $v_5$ blue and $v$,$v_2$,$v_4$,$v_6$, and $v_7$ white. 


If $v$ is adjacent $v_3$ only, we color $v_2$,$v_3$, and $v_5$ blue and $v$, $v_1$,$v_4$,$v_6$ and $v_7$ white. 

If $v$ is adjacent $v_4$ only, we color $v_2$, $v_4$, and $v_5$ blue, and $v$,$v_1$,$v_3$,$v_6$, and $v_7$ white. 


If $v$ is adjacent $v_6$ only, we color $v_2$,$v_4$, and $v_6$ blue and $v$,$v_1$,$v_3$,$v_5$, and $v_7$ white. 

If $v$ is adjacent $v_7$ only, we color $v_2$,$v_4$, and $v_7$ blue and $v$,$v_1$,$v_3$,$v_5$, and $v_6$ white. 

\medskip

For the second extension, let our vertices be named as before, and include $v_7$ as a degree 1 vertex adjacent to $v_4$.

\medskip

If $v$ is adjacent to $v_1$ only, we color $v_1$,$v_4$, and $v_6$ blue and $v$,$v_2$,$v_3$,$v_5$ and $v_7$ white.


If $v$ is adjacent to $v_3$ only, we color $v_2$,$v_3$, and $v_5$ blue and $v$,$v_1$,$v_4$,$v_6$, and $v_7$ white.


If $v$ is adjacent to $v_5$ only, we color $v_2$,$v_4$, and $v_5$ blue and $v$,$v_1$,$v_3$,$v_6$, and $v_7$ white.

If $v$ is adjacent to $v_6$ only, we color $v_2$,$v_4$, and $v_6$ blue and $v$,$v_1$,$v_3$,$v_5$ and $v_7$ white

If $v$ is adjacent to $v_7$ only, we color $v_3$,$v_6$, and $v_7$ blue and $v$,$v_1$,$v_2$,$v_4$ and $v_5$ white.

\medskip

Notice we can always color $v$ white and add edges between $v$ and any subset of the remaining vertices without affecting the number of blue vertices by Lemma 2.4.

\begin{figure}[h!]
\begin{center}
\begin{tikzpicture}[node distance = 1cm, line width = 0.5pt]

\coordinate (1) at (0,0);
\coordinate (2) at (2,0);
\coordinate (3) at (1,1);
\coordinate (4) at (1,2);
\coordinate (5) at (0.5,-1);
\coordinate (6) at (1.5,-1);

\draw (1)--(2);
\draw (1)--(5);
\draw (1)--(3);
\draw (2)--(3);
\draw (2)--(6);
\draw (5)--(6);
\draw (3)--(4);

\foreach \point in {1,2,3,4,5,6} \fill (\point) 
circle (4pt);

\node (A) at (1,2.35) {$v_1$};
\node (B) at (1.35,1) {$v_2$};
\node (C) at (-0.35,0) {$v_3$};
\node (D) at (0.15,-1) {$v_4$};
\node (E) at (1.85,-1) {$v_5$};
\node (F) at (2.35,0) {$v_6$};

\filldraw [white] 
(0,0) circle (3pt)
(2,0) circle (3pt)
(0.5,-1) circle (3pt)
(1,1) circle (3pt)
(1,2) circle (3pt)
(1.5,-1) circle (3pt);

\end{tikzpicture}
\end{center}
\caption{Graph 86}
\end{figure}
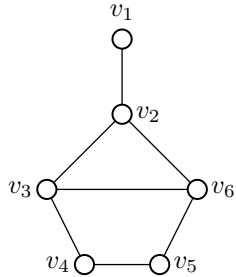

\par\textbf{Graph 86:} We will show that all extensions of $G$ to a graph with 7 vertices have no true blue vertices of degree 1. Let $v$ be the new vertex. Let $v_1$ be the vertex of degree 1, $v_2$ be the vertex of degree 2 that is adjacent to $v_1$, $v_3$ be the vertex of degree 3 that is adjacent to and on the left of $v2$, $v_4$ be the vertex of degree 2 that is adjacent to $v_3$, $v_5$ be the vertex of degree 2 that is adjacent to $v_4$, and $v_6$ be the vertex of degree 3 that is adjacent to $v_5$. We have the following cases.

\medskip

If $v$ is adjacent to $v_1$ only, we color $v_1$,$v_3$, and $v_6$ blue and $v$,$v_2$,$v_4$,$v_5$ white.

If $v$ is adjacent to $v_2$ only, the resulting graph contains a cherry. So we color $v$,$v_1$ white, and color the remaining vertices blue.

If $v$ is adjacent to $v_3$ only, we color $v_2$,$v_3$, and $v_4$ blue and $v$,$v_1$,$v_5$, and $v_6$ white.

If $v$ is adjacent to $v_4$ only, we color $v_2$,$v_4$, and $v_6$ blue and $v$,$v_1$,$v_3$, and $v_5$ white. 

The remaining cases of connecting $v$ to $G$ are analogous to one of the previous four. Also note that in each case where the resulting graph does not contain a cherry, we can always color $v$ white and add edges between $v$ and any subset of the remaining vertices without affecting the number of blue vertices.

\begin{figure}[h!]
\begin{center}
\begin{tikzpicture}[node distance = 1cm, line width = 0.5pt]

\coordinate (1) at (0,0);
\coordinate (2) at (2,0);
\coordinate (3) at (1,1);
\coordinate (4) at (2.5,-1);
\coordinate (5) at (0.5,-1);
\coordinate (6) at (1.5,-1);

\draw (1)--(2);
\draw (1)--(5);
\draw (1)--(3);
\draw (2)--(3);
\draw (2)--(6);
\draw (5)--(6);
\draw (6)--(4);

\foreach \point in {1,2,3,4,5,6} \fill (\point) 
circle (4pt);

\filldraw [white] 
(1,1) circle (3pt)
(2,0) circle (3pt)
(0.5,-1) circle (3pt)
(0,0) circle (3pt)
(2.5,-1) circle (3pt)
(1.5,-1) circle (3pt);

\node (A) at (2.5,-1.4) {$v_1$};
\node (B) at (1.5,-1.4) {$v_2$};
\node (C) at (0.5,-1.4) {$v_3$};
\node (D) at (1,1.4) {$v_4$};
\node (E) at (-0.4,0) {$v_5$};
\node (F) at (2.4,0) {$v_6$};

\end{tikzpicture}
\end{center}
\caption{Graph 87}
\end{figure}
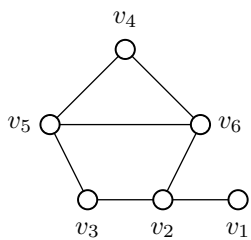

\par\textbf{Graph 87:} All but one of the single vertex extensions of $G$ is either a cherry or suitable for extension by lemma 1.1. Let $v_1$ be the only vertex of degree one that is adjacent to $v_2$; $v_2$ is adjacent to $v_3$ and $v_6$; $v_3$ will be adjacent to $v_2$ and $v_4$ and $v_4$ will be adjacent to $v_5$ and $v_6$.\par If $v$ is adjacent to $v_1$, we can fill in $v_1$, $v_4$, and $v_6$. \par If $v$ is adjacent to $v_2$, it creates a cherry and this case can be discarded. \par If $v$ is adjacent to $v_3$, we can fill in $v_2$, $v_3$, and $v_5$. \par If $v$ is adjacent to $v_5$, we can fill in $v_2$, $v_4$, and $v_5$. \par If $v$ is adjacent to $v_6$, we can fill in $v_2$, $v_4$, and $v_6$.

\par When $v$ is adjacent to $v_4$, it is unavoidable to have a True Blue vertex and must be extended further. Now we will consider a graph in which $v$ is adjacent to $v_4$. We can then add in an additional vertex, $u$. \par If $u$ is adjacent to $v_1$, we can fill in $v_1$, $v_4$, and $v_6$. \par If $u$ is adjacent to $v_2$, it again creates a cherry and we can discard this case. \par If $u$ is adjacent to $v_3$, we can fill in $v_3$, $v_4$, and $v_6$. \par If $u$ is adjacent to $v_4$, we get another cherry and thus another discarded case.\par If $u$ is adjacent to $v_5$, we can fill in $v_2$, $v_4$, and $v_5$. \par If $u$ is adjacent to $v_6$, we can fill in $v_2$, $v_4$, and $v_6$. \par If $u$ is adjacent to $v$, then we can fill $v$, $v_2$, and $v_5$. 

In every case, we see a suitable coloring for extension by Lemma 1.1. Thus we may extend graph 87 to a super-graph $H$, and by Lemma 1.1, $F(H)\geq F(G)\geq 3$.

\begin{figure}[h!]
\begin{center}
\begin{tikzpicture}[node distance = 1cm, line width = 0.5pt]

\coordinate (1) at (0,0);
\coordinate (2) at (0.5,0.5);
\coordinate (3) at (1,1);
\coordinate (4) at (1,1.5);
\coordinate (5) at (1.5,0.5);
\coordinate (6) at (2,0);

\draw (1)--(2);
\draw (2)--(3);
\draw (3)--(4);
\draw (3)--(5);
\draw (5)--(6);
\draw (5)--(2);

\foreach \point in {1,2,3,4,5,6} \fill (\point) 
circle (4pt);

\filldraw [white] 
(1,1) circle (3pt)
(1,1.5) circle (3pt)
(1.5,0.5) circle (3pt)
(2,0) circle (3pt)
(0,0) circle (3pt)
(0.5,0.5) circle (3pt);

\node (A) at (0.6,1) {$v_1$};
\node (B) at (1.9,0.5) {$v_2$};
\node (C) at (0.1,0.5) {$v_3$};
\node (D) at (1,1.9) {$v_4$};
\node (E) at (2,-0.4) {$v_5$};
\node (F) at (0,-0.4) {$v_6$};

\end{tikzpicture}
\end{center}
\caption{Graph 96}
\end{figure}
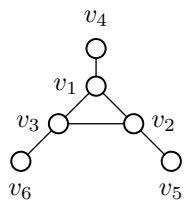

\textbf{Graph 96:} We will show that any extension of $G$ to a graph with 7 vertices will have at most one true blue vertex. Let the vertices of the triangle be $v_{1},v_{2},$ and $v_{3}$ which are adjacent to vertices of degree $1$, $v_{4},v_{5},$ and $v_{6}$ respectively.
	
Consider the addition of a new vertex $v$. We first consider when $v$ is adjacent to one of the vertices $v_{4},v_{5},$or $v_{6}$. Without loss of generality assume $v$ is adjacent to $v_{4}$. Then we color $v_{2},v_{3},$and $v_{4}$ blue and $v,v_{1},v_{5},$ and $v_{6}$ white. Next we consider when $v$ is adjacent to one of the vertices $v_{1},v_{2},$or $v_{3}$. Without loss of generality assume $v$ is adjacent to $v_{1}$. Then we color $v_{1},v_{3}$, and $v_{4}$ blue and $v,v_{2},v_{5},$ and $v_{6}$ white.

\begin{figure}[h!]
\begin{center}
\begin{tikzpicture}[node distance = 1cm, line width = 0.5pt]

\coordinate (1) at (0,0);
\coordinate (2) at (0.5,0);
\coordinate (3) at (1.5,0);
\coordinate (4) at (2,0);
\coordinate (5) at (2.5,0);
\coordinate (6) at (1,0.5);

\draw (1)--(2);
\draw (2)--(3);
\draw (3)--(4);
\draw (5)--(4);
\draw (2)--(6);
\draw (6)--(3);

\foreach \point in {1,2,3,4,5,6} \fill (\point) 
circle (4pt);

\filldraw [white] 
(1.5,0) circle (3pt)
(2.5,0) circle (3pt)
(1,0.5) circle (3pt)
(0,0) circle (3pt)
(0.5,0) circle (3pt)
(2,0) circle (3pt);

\node (A) at (0,-0.4) {$v_1$};
\node (B) at (0.5,-0.4) {$v_2$};
\node (C) at (1,0.9) {$v_3$};
\node (D) at (1.5,-0.4) {$v_4$};
\node (E) at (2,-0.4) {$v_5$};
\node (F) at (2.5,-0.4) {$v_6$};

\end{tikzpicture}
\end{center}
\caption{Graph 98}
\end{figure}
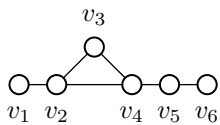

\textbf{Graph 98:} 
We begin by extending graph 98 by one vertex, $v$ and connecting $v$ to every vertex in $G$. Notice there will be no case where $v$ is adjacent to $v_5$, $v_4$, or $v_2$. This is because connecting a vertex to either one would create a cherry and that specific case is covered by the cherry lemma.

If $v$ is adjacent to $v_6$, then $v_2,v_4,$ and $v_6$ can be filled.

If $v$ is adjacent to $v_3$, then $v_2,v_3,$ and $v_5$ can be filled.

If $v$ is adjacent to $v_1$, then $v_1,v_3,$ and $v_5$ can be filled.

We can see in all cases there are no true blue vertices. Thus, all cases are extendable and this implies graph 98 can be extended indefinitely. It is also important to notice that in each case, we are able to find a failed zero-forcing set of size three.

\begin{figure}[h!]
\begin{center}
\begin{tikzpicture}[node distance = 1cm, line width = 0.5pt]

\coordinate (1) at (0,0);
\coordinate (2) at (0,1);
\coordinate (3) at (1,1);
\coordinate (4) at (1,0);
\coordinate (5) at (2,1);
\coordinate (6) at (2,0);

\draw (1)--(2);
\draw (2)--(3);
\draw (3)--(4);
\draw (3)--(5);
\draw (4)--(6);
\draw (1)--(4);

\foreach \point in {1,2,3,4,5,6} \fill (\point) 
circle (4pt);

\filldraw [white] 
(0,1) circle (3pt)
(1,0) circle (3pt)
(2,0) circle (3pt)
(0,0) circle (3pt)
(2,1) circle (3pt)
(1,1) circle (3pt);

\node (A) at (0,-0.4) {$v_1$};
\node (B) at (1,-0.4) {$v_4$};
\node (C) at (2,-0.4) {$v_6$};
\node (D) at (0,1.4) {$v_2$};
\node (E) at (1,1.4) {$v_3$};
\node (F) at (2,1.4) {$v_5$};

\end{tikzpicture}
\end{center}
\caption{Graph 102}
\end{figure}
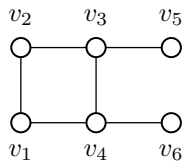

\textbf{Graph 102:} 
We begin by extending graph 102 by one vertex, $v$ and connecting $v$ to every vertex in G. Due to the symmetry of the graph, we can ignore the case where $v$ is adjacent to $v_5$. Notice there will be no case where $v$ is adjacent to $v_3$ or $v_4$. This is because connecting a vertex to either one will create a cherry structure, and this is covered by the cherry lemma.

If $v$ is adjacent to $v_6$, then $v_1,v_3,$ and $v_6$ can be filled.

If $v$ is adjacent to $v_1$, then $v_1,v_3,$ and $v_5$ can be filled. Due to the symmetry of the graph, this case is the same as when $v$ is adjacent to $v_2$. So it can be ignored. But, this case has a true blue and so it must be extended. Suppose we add a new vertex $u$.

If $u$ is adjacent to $v_6$, then $v_1,v_3,$ and $v_6$ can be filled.

If $u$ is adjacent to $v_2$, then $u,v_1,$ and $v_3$ can be filled. Here vertex $u$ is true blue, so we must extend further. Suppose we add another vertex called $w$.

If $w$ is adjacent to $v_6$, then $v_2,v_3,$ and $v_6$ can be filled. Due to the symmetry of the graph, all remaining cases will have the same results. Notice there are no true blue vertices and that $F(G) \geq 3$ in all cases. Since there are no true blue vertices, then this case is now extendable. Since each case is extendable, then graph 102 can be extended indefinitely without being forced.

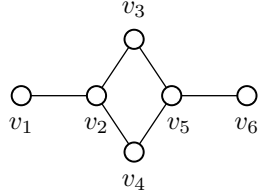
\begin{figure}[h!]
\begin{center}
\begin{tikzpicture}[node distance = 1cm, line width = 0.5pt]
\coordinate (1) at (0,0);
\coordinate (2) at (1,0);
\coordinate (3) at (2.0,0);
\coordinate (4) at (1.5,0.75);
\coordinate (5) at (1.5,-0.75);
\coordinate (6) at (3,0);

\draw (1)--(2);
\draw (2)--(4);
\draw (2)--(5);
\draw (5)--(3);
\draw (4)--(3);
\draw (3)--(6);

\foreach \point in {1,2,3,4,5,6} \fill (\point) circle (4pt);

\filldraw [white] 
(1.5,-0.75) circle (3pt)
(1.5,0.75) circle (3pt)
(3,0) circle (3pt)
(0,0) circle (3pt)
(1,0) circle (3pt)
(2,0) circle (3pt);

\node (A) at (0,-0.4) {$v_1$};
\node (B) at (1,-0.4) {$v_2$};
\node (C) at (1.5,1.15) {$v_3$};
\node (D) at (2.1,-0.4) {$v_5$};
\node (E) at (3,-0.4) {$v_6$};
\node (F) at (1.5,-1.15) {$v_4$};

\end{tikzpicture}
\end{center}
\caption{Graph 103}
\end{figure}

\textbf{Graph 103:}
We begin by extending graph 103 by one vertex, $v$ ,and connecting $v$ to every vertex in G. Notice there will be no case where $v$ is adjacent to $v_5$ or $v_2$. This is because connecting a vertex to either would create a cherry structure and that case is covered by the cherry lemma. It is also important to notice that in each case and sub-case, we are able to find a failed zero-forcing number of three. Our goal with extending is to find no true blue vertex within the graph. Due to the symmetry of the graph, we can ignore the cases where $v$ is adjacent to $v_4$ and $v_1$.

If $v$ is adjacent to $v_6$, then $v_3,v_4,$ and $v_6$ can be filled.

If $v$ is adjacent to $v_3$, then $v_2,v_5,$ and $v_6$ can be filled. Since $v_6$ is true blue, we must then extend further. Suppose we add a new vertex called $u$.

If $u$ is adjacent to $v_6$, then $v_3,v_4,$ and $v_6$ can be filled.

If $u$ is adjacent to $v$, then $v,v_2,$ and $v_5$ can be filled.

If $u$ is adjacent to $v_4$, then $v_4,v_5,$ and $v_6$ can be filled. Here, $v_6$ is a true blue vertex, so we must extend to 9 vertices. Suppose we add a new vertex called $w$.

If $w$ is adjacent to $v_1$, then $v_1,v_3,$ and $v_4$ can be filled.

If $w$ is adjacent to $v_6$, then $v_3,v_4,$ and $v_6$ can be filled.

If $w$ is adjacent to $v$, then $v,v_2,$ and $v_5$ can be filled.

If $w$ is adjacent to $u$, then $v,v_2,$ and $v_5$ can be filled.

Since there are no more true blue vertices, then this case is resolved and can be extended indefinitely. Since each case is extendable, this implies that graph 103 can also be extended indefinitely without being forced..

\begin{figure}[h!]
\begin{center}
\begin{tikzpicture}[node distance = 1cm, line width = 0.5pt]

\coordinate (1) at (0,0);
\coordinate (2) at (2,0);
\coordinate (3) at (1,0.75);
\coordinate (4) at (1,1.5);
\coordinate (5) at (0.5,-1);
\coordinate (6) at (1.5,-1);

\draw (1)--(5);
\draw (1)--(3);
\draw (2)--(3);
\draw (2)--(6);
\draw (5)--(6);
\draw (3)--(4);

\foreach \point in {1,2,3,4,5,6} \fill (\point) 
circle (4pt);

\node (A) at (1.4,1.5) {$v_1$};
\node (B) at (1.4,0.75) {$v_2$};
\node (C) at (-0.4,0) {$v_3$};
\node (D) at (0.5,-1.4) {$v_4$};
\node (E) at (1.4,-1.4) {$v_5$};
\node (F) at (2.4,0) {$v_6$};

\filldraw [white] 
(0,0) circle (3pt)
(2,0) circle (3pt)
(0.5,-1) circle (3pt)
(1,0.75) circle (3pt)
(1,1.5) circle (3pt)
(1.5,-1) circle (3pt);

\end{tikzpicture}
\end{center}
\caption{Graph 105}
\end{figure}
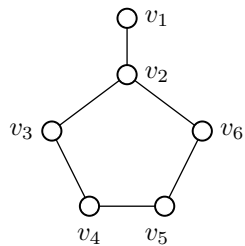

\textbf{Graph 105:}
We begin by extending graph 105 by one vertex, $v$ and connecting $v$ to every vertex in G. Notice there will be no case where $v$ is adjacent to $v_2$. This is because adding an extension to $v_2$ would create a cherry structure and this case is covered by the cherry lemma. It is also important to notice that in each case, we are able to find a failed zero-forcing number of three. Our goal with extending is to find no true blue vertex within the graph.

If $v$ is adjacent to $v_1$, then $v_1,v_3,$ and $v_6$ can be filled.

If $v$ is adjacent to $v_6$, then $v_2,v_4,$ and $v_6$ can be filled.

If $v$ is adjacent to $v_3$, then $v_2,v_3,$ and $v_5$ can be filled.

If $v$ is adjacent to $v_5$, then $v_1,v_2,$ and $v_5$ can be filled.

If $v$ is adjacent to $v_4$, then $v_1,v_2,$ and $v_4$ can be filled.

Notice the last two cases have a true blue vertex. Due to the symmetry of the graph, we can focus on one and ignore the other. Assume the case where $v$ is adjacent to $v_5$. Suppose we add a new vertex called $u$.

If $u$ is adjacent to $v_4$, then $v_2,v_4,$ and $v_5$ can be filled.

If $u$ is adjacent to $v_6$, then $v_2,v_4,$ and $v_6$ can be filled.

If $u$ is adjacent to $v_3$, then $v_2,v_3,$ and $v_5$ can be filled.

If $u$ is adjacent to $v$, then $v,v_2,$ and $v_4$ can be filled.

If $u$ is adjacent to $v_1$, then $v_1,v_3,$ and $v_6$ can be filled.

Since there are no more true blue vertices in these sub-cases, then this case is resolved and can be extended indefinitely. Since each case is extendable, this implies that graph 105 can be extended indefinitely without being forced and $F(G) \geq 3$.

\medskip

\textbf{Graph 112:} 

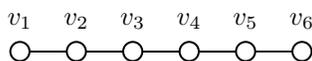
\begin{figure}[h!]
\begin{center}
\begin{tikzpicture}[node distance = 1cm, line width = 0.5pt]
\coordinate (1) at (0,0);
\coordinate (2) at (0.75,0);
\coordinate (3) at (1.5,0);
\coordinate (4) at (2.25,0);
\coordinate (5) at (3,0);
\coordinate (6) at (3.75,0);

\draw \foreach \x [remember=\x as \lastx (initially 1)] in {2,3,4,5,6}{(\lastx) -- (\x)};
\draw (6)--(4);
\draw (5)--(1);
\draw (5)--(4);
\draw (5)--(3);
\draw (5)--(2);
\draw (1)--(2);
\draw (2)--(3);
\draw (3)--(4);

\node (A) at (0,.4) {$v_1$};
\node (A) at (0.75,.4) {$v_2$};
\node (A) at (1.5,.4) {$v_3$};
\node (A) at (2.25,.4) {$v_4$};
\node (A) at (3,.4) {$v_5$};
\node (A) at (3.75,.4) {$v_6$};



\foreach \point in {1,2,3,4,5,6} \fill (\point) circle (4pt);

\filldraw [white] 
(0,0) circle (3pt);
\filldraw [white] 
(0.75,0) circle (3pt);
\filldraw [white] 
(1.5,0) circle (3pt);
\filldraw [white] 
(2.25,0) circle (3pt);
\filldraw [white] 
(3,0) circle (3pt);
\filldraw [white] 
(3.75,0) circle (3pt);
\end{tikzpicture}
\end{center}
\caption{Graph 112}
\end{figure}
$P_{6}$: Any extension of $P_{6}$ to a graph with 7 vertices will have at most one true blue vertex.
	
Let the vertices of $v_{1}$ or $v_{6}$ then we can color vertex $v$ white and color vertices $v_{1},v_{3},$and $v_{5}$ blue to create a failed zero-forcing set of size $3$. If $v$ is not adjacent to either $v_{1}$ or $v_{6}$ then $v$ must be adjacent to one or more of the vertices $v_{2},v_{3},v_{4},$ or $v_{5}$. We consider different cases, each showing that we can create a failed zero-forcing set of size $3$ with at most one true blue vertex. We consider a series of different cases.

\medskip 

If $v$ is adjacent to $v_{2}$ and $v_{3}$ only then we color $v_{2},v_{3},$ and $v_{5}$ blue and $v,v_{1},v_{4},$ and $v_{6}$ white.
	
If $v$ is adjacent to $v_{2}$ and $v_{4}$ only then we color $v_{2},v_{3},$ and $v_{4}$ blue and $v,v_{1},v_{5},$ and $v_{6}$ white.
	
If $v$ is adjacent to $v_{2}$ and $v_{5}$ only then we color $v,v_{2}$ and $v_{5}$ blue and $v_{1},v_{3},v_{4},$ and $v_{6}$ white.
	
If $v$ is adjacent to $v_{3}$ and $v_{4}$ only then we color $v,v_{2},$ and $v_{5}$ blue and $v_{1},v_{3},v_{4},$ and $v_{6}$ white.
	
If $v$ is adjacent to $v_{2},v_{3},$ and $v_{4}$ only then we color $v_{2},v_{3}$, and $v_{5}$ blue and $v_{1},v_{3},v_{4},$ and $v_{6}$ white.
	
If $v$ is adjacent to $v_{2},v_{3},$ and $v_{5}$ only then we color $v,v_{2},$ and $v_{4}$ blue and $v_{1},v_{3},v_{5},$ and $v_{6}$ white.
	
If $v$ is adjacent to $v_{2},v_{3},v_{4}$ and $v_{5}$ only, then we color $v_{2},v_{3}$, and $v_{5}$ blue and $v,v_{1},v_{4}$, and $v_{6}$ white.

\begin{subsection}{Proof of the main theorems} 
We now prove Theorem 2.1. Let $G$ be a graph with at least $7$ vertices. Then $F(G^{\prime })\geq 3$.

\begin{proof}
We will show that all graphs with $8$ vertices have a failed zero-forcing set of size $3$ and have at most one true blue vertex. Now start with a graph $G$ with $n=9$ or more vertices. Remove $n-8$ vertices along with their incident edges to create the graph $G^{\prime }$. By our previous work, $F(G^{\prime })\geq 3$ and $G^{\prime }$ has no true blue vertices. By the Extension Lemma 2.4, we can conclude that $F(G)\geq 3$.
\end{proof}	
\end{subsection}

We have now proved our main theorem that gives us a characterization of all graphs with $F(G)=2$.

\begin{theorem}
The graphs with $F(G)=2$ are precisely those shown in Figure 1.
\end{theorem}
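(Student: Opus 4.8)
The plan is to prove the two containments separately. For the direction that each graph $G$ in Figure~1 satisfies $F(G)=2$, I would argue exactly as in the verification already recorded immediately after Figure~1: the blue/white colorings drawn in the figure are stalled sets of size $2$, so $F(G)\ge 2$, while for the upper bound $F(G)\le 2$ I would split into the three disconnected graphs (handled by Lemma~2.2, or directly by noting that coloring any three vertices blue forces everything), the connected four-vertex graphs (where $F(G)\le |V(G)|-2=2$ by the Fetcie--Jacob--Saavedra bound $0\le F(G)\le n-2$), the two paths $P_5$ and $P_6$, and the remaining small connected graphs ($C_5$, $C_5$ with a chord, the five-vertex wheel, the triangle with two pendant edges, and the corona of $K_3$), each disposed of by the short case analysis on which three vertices are colored blue that is written out in the text.

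For the converse --- that $F(G)=2$ forces $G$ to be one of the fifteen listed graphs --- I would partition all graphs according to connectivity and number of vertices and handle each piece with a result already established. If $G$ is disconnected, Lemma~2.2 gives $G\in\{3K_1,\,K_2+K_1,\,K_2+K_2\}$, each of which appears in Figure~1. If $G$ is connected with $|V(G)|\le 3$, then $G$ is one of $K_1,K_2,P_3,K_3$, and the Fetcie--Jacob--Saavedra classification of graphs with $F\le 1$ gives $F(G)\in\{0,1\}$, so no such $G$ occurs. If $G$ is connected with $|V(G)|\ge 7$, then Theorem~2.1 gives $F(G)\ge 3$, so again no such $G$ occurs. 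The only remaining case is that $G$ is connected with $4\le|V(G)|\le 6$; here I would carry out a finite check over the $6$, $21$, and $112$ connected graphs on $4$, $5$, and $6$ vertices respectively (using the table of connected graphs in \cite{Cvetković}, and in the six-vertex case cross-referencing the explicit analysis of the sixteen exception graphs in Section~2.1, each of which is shown there to admit a stalled set of size $3$), and read off which of them have $F(G)=2$; these turn out to be exactly the twelve connected graphs pictured in Figure~1. Taking the union over the four cases yields precisely the list of fifteen.

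The step I expect to be the main obstacle is the bookkeeping in the finite range $4\le|V(G)|\le 6$: one must be certain the enumeration is genuinely exhaustive, that $F$ has been computed correctly for each catalogued graph, and in particular that no connected six-vertex graph with $F=2$ has been overlooked --- which is why it is worth tying this step to the Section~2.1 analysis, where all but $P_6$ and the corona of $K_3$ are shown to have a stalled set of size $3$, and every non-exception six-vertex graph is reducible. Once that enumeration is trusted, the assembly is routine: Theorem~2.1 eliminates everything with at least seven vertices, Lemma~2.2 settles the disconnected graphs, the orders at most three are excluded by the $F\le 1$ classification, and the forward inclusion is the coloring check already supplied. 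A minor point to keep straight is that we allow $G$ to be disconnected and to contain isolated vertices, so that the known equivalence ``$F(G)=n-1$ iff $G$ has an isolated vertex'' is consistent with, and already subsumed by, the three disconnected graphs produced by Lemma~2.2.
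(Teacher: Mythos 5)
Your proposal is correct and follows essentially the same route as the paper: the forward inclusion is the coloring verification given after Figure~1, and the converse is assembled from Lemma~2.2 for disconnected graphs, the finite check of connected graphs on $4$--$6$ vertices (including the sixteen six-vertex exceptions treated in Section~2.1), and Theorem~2.1 for seven or more vertices. The paper leaves this assembly implicit rather than writing it out, but your decomposition matches it exactly.
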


\section{Acknowledgements}  This research was supported by the National Science Foundation Research for Undergraduates Award 1950189.



\end{document}